% This will be the manuscript we submit to a journal.

\documentclass[12pt,letterpaper]{amsart}

%% PACKAGES
\usepackage{amsmath,amssymb,amsthm}
\usepackage{fullpage}
\usepackage{verbatim}
\usepackage[noadjust]{cite} % to put citations in numerical order
\usepackage{hyperref}
\usepackage[capitalize, nameinlink]{cleveref}
\usepackage{enumitem}
\usepackage[usenames,dvipsnames]{xcolor} % for e.g. \todo color
\usepackage{tikz}
\usepackage[font=small,labelfont=bf, margin=.5in]{caption}
\usepackage{babel}
\usepackage{parskip, amsfonts, amsmath, changepage, amsthm, setspace, geometry, verbatim, MnSymbol,chemarrow, tikz}

\colorlet{prettygreen}{ForestGreen!60!LimeGreen}

\makeatletter
\def\@defaultbiblabelstyle#1{[#1]}
\makeatother

% Tikz
\usetikzlibrary{calc, arrows.meta}
\tikzset{vtx/.style={circle, fill, inner sep=1.5pt}}
\tikzset{openvtx/.style={circle, draw, inner sep=1.5pt}}

\usepackage{tikz-cd}

%% THEOREMS
\begin{comment}
\newtheorem{theorem}{Theorem}[section]
\newtheorem{lemma}[theorem]{Lemma}
\newtheorem{proposition}[theorem]{Proposition}

\newtheorem*{claim*}{Claim}

\theoremstyle{definition}
\newtheorem{definition}[theorem]{Definition}

\theoremstyle{remark}
\newtheorem{remark}[theorem]{Remark}

\crefname{claim}{Claim}{Claims}

\newlist{homtenum}{enumerate}{1}
\setlist[homtenum,1]{leftmargin=36pt}
\end{comment}

%% =========
%% FUNCTIONS
%% =========

\newcommand{\eps}{\epsilon}

\newcommand{\Z}{{\mathbb Z}}

%% TITLE/AUTHOR/ETC
\title{Chromatic numbers of rank-two Abelian Cayley graphs}
\author{Mike Krebs}
\address{Department of Mathematics, California State University --- Los
Angeles}
\email{mkrebs@calstatela.edu}
\author{Alejandro Leyva}
\address{Department of Mathematics, University of California, Berkeley}
\email{aleyva07@berkeley.edu}

% \subjclass[2020]{05C25, 05C38}

%\date{\today}
\makeatletter
\let\mytitle\@title
\let\myauthor\@author
\makeatother

%%%%%%%%%%%%%%%%%%%%%%%%%%%%%%%%%%%%%%%%%%%%%%%%%%%%%%%%%%%%%%%%%%%%%%%%%
%%%%%%%%%%%%%%%%%%%%%%%%%%%%% THEOREM STYLE COMMEND %%%%%%%%%%%%%%%%%%%%%
%%%%%%%%%%%%%%%%%%%%%%%%%%%%%%%%%%%%%%%%%%%%%%%%%%%%%%%%%%%%%%%%%%%%%%%%%

\newtheorem{Thm}{Theorem}[section]

\newtheorem{Lem}[Thm]{Lemma}
\newtheorem{Cor}[Thm]{Corollary}
\newtheorem{Guess}[Thm]{Conjecture}
 
\theoremstyle{definition}
\newtheorem{Def}[Thm]{Definition}
 
\theoremstyle{remark}

%% END PREAMBLE

\begin{document}

\begin{abstract} A connected Cayley graph for an Abelian group generated by a finite symmetric subset $S$ can be represented by an integer matrix, its Heuberger matrix.  We call the number of columns of that matrix its \emph{rank} and the number of rows its \emph{dimension}.  Several previous papers have dealt with the question of finding a formula for the chromatic number of an Abelian Cayley graph in terms of an associated Heuberger matrix.  In this paper, we fully resolve this matter for all integer matrices of rank $\leq 2$.  Prior results provide such formulas when the rank is 1, as well as when the rank is 2 and the dimension is no more than 4.  Here, we complete the picture for the rank-two case by showing that when the rank is 2 and the dimension is at least 5, then the chromatic number equals 3 unless the graph has loops (in which case it is uncolorable); the graph is bipartite (in which case the chromatic number is 2); or the matrix has a zero row (in which case, the chromatic number does not change when that row is deleted).
\end{abstract}

%%We show that given an arbitrary set of four plane unit vectors $v_1, v_2, v_3, v_4$, the Cayley graph generated by $\{\pm v_1, \pm v_2, \pm v_3, \pm v_4\}$ is always $3$-colorable.  Indeed, we show that this is a specific case of a much more general result wherein we determine the chromatic number of an arbitrary abelian Cayley graph generated by a set of four elements and their negatives, subject to the constraint that the group of relations between those elements has rank no more than $2$.

\maketitle
 
\section{Introduction}

\subsection{Overview}
%\todo{Define chromatic number.   Define Cayley graph.} 
We begin by recalling some basic definitions.  We say a subset $S$ of a group $G$ is \emph{symmetric} if $x^{-1}\in S$ whenever $x\in S$.  The \emph{Cayley graph} for a group $G$ with a symmetric subset $S$, denoted $\text{Cay}(G,S)$, is the graph whose vertices are the elements of $G$, where each vertex $g$ is adjacent to every vertex of the form $gs$ with $s\in S$.  In this case we call the set $S$ the \emph{generating set}, and we refer to its elements as \emph{generators}.  When the group $G$ is abelian, naturally enough we call the graph an \emph{Abelian Cayley graph}.  The \emph{chromatic number} of a graph $X$, denoted $\chi(X)$, is the minimum number of colors needed to assign each vertex in the graph a color such that no two adjacent vertices have the same color.

The chromatic number of Cayley graphs is important in information theory (see, for example, \cite{Info1}) as well as in the construction of expanders and concentrators (see, for example, \cite{Info2}). There are also applications with random sets and graphs, as seen in \cite{RandomGraph} and \cite{RandomSet}.

This paper focuses on chromatic numbers of Abelian Cayley graphs.  As discussed in \cite{Cervantes-matrix-method}, to every finite-degree, connected Abelian Cayley graph we may associate an integer matrix called a \emph{Heuberger matrix}.  Conversely, every integer matrix has an associated Abelian Cayley graph.  In \cref{subsec:Prelims} we lay out the essentials of this correspondence.

%\todo{Note we're focusing on abelian Cayley graphs.  Correspondence between abelian Cayley graphs and integer matrices.}

%\todo{Remark about square matrices, so WLOG we can assume number of rows is at least the number of columns.}

We call the number of columns of the Heuberger matrix its \emph{rank} and the the number of rows its \emph{dimension}.  The goal of this research program is to obtain the chromatic number of an Abelian Cayley graph from an associated Heuberger matrix.  We may divvy into cases according to the rank and dimension.

In matrices where the rank is greater than the dimension, one of the columns will be in the $\Z$-span of the others, so it can be reduced to a square matrix that represents the same graph.  (See \cref{lem:isomorphisms} for more detail.)  So, without loss of generality, we can assume that the number of rows is at least the number of columns.

%\todo{ABOUT THE PAST: Previously a formula for chromatic number of abelian Cayley graphs of rank 1 was determined---give citation (Cervantes, Leon, Carrillo).  Also this was done for matrices of rank 2 and dimension $\leq 4$---give citations. Cervantes/Krebs (1x2, 2x2, 3x2), preprint with Eng et al.  Explicitly state the theorems?  If not stating them, at least refer to where they are in the paper.}

\subsubsection{The past} Previously, chromatic numbers of Abelian Cayley graphs have been found in cases where the rank and/or dimension is small.  Specifically, for rank 1 the chromatic number is determined in \cite{Cervantes-matrix-method}.  (See also \cite{Carrillo-et-al}.)  We state that result here as \cref{thm:tomato}.  For rank 2, dimensions 2 and 3, theorems completely determining the chromatic number appear in \cite{small-dim-rank}.  These are stated here as \cref{thm:two-by-two} and \cref{3by2}.  For rank 2, dimension 4, this is accomplished in \cite{Eng}.  That theorem is stated here as \cref{thm:four-by-two}.

%\todo{ABOUT THE PRESENT: Explicitly state the main theorem for m x 2 with $m\geq 5$.  ``The main new result in this paper...''  Move the actual begin theorem thing here.}

\subsubsection{The present} The main new result in this paper is for $m\times 2$ matrices where $m\geq 5$. In these cases, the chromatic number of the associated Cayley graph is $3$, with a few standard exceptions.  Recall that a \emph{loop} in a graph is an edge from a vertex to itself.  A graph with a loop cannot be properly colored.  (We adopt the convention that the chromatic number of a graph with loops is $\infty$.)  A graph is \emph{bipartite} if its vertex set can be partitioned into two sets $A$ and $B$ such that every edge has one endpoint in $A$ and one in $B$.  A connected bipartite graph always has chromatic number is $2$.  We call a row of a matrix a \emph{zero row} if all of its entries are zero.

\begin{Thm}\label{thm:main}
    Let $m\geq 5$ be an integer.  Let $M_X$ be an $m\times 2$ integer matrix and $X=M_X^{SACG}$ (the Abelian Cayley graph associated with $M_X$). Suppose $M_X$ has no zero rows.  If $X$ is not bipartite and has no loops, then $\chi(X)=3$.
\end{Thm}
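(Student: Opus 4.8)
The lower bound is immediate: since $X$ is a connected Cayley graph with no loops it has an edge, and a connected non-bipartite graph satisfies $\chi\ge 3$. All the work is in the upper bound $\chi(X)\le 3$. I would phrase a proper $3$-coloring of $X$ as a function $c\colon\Z^m\to\Z/3$ that is constant on cosets of the column lattice $\Lambda=\langle\text{columns of }M_X\rangle$ and satisfies $c(v+e_i)\ne c(v)$ for every standard basis vector $e_i$, where $G=\Z^m/\Lambda$. The first family to try are the \emph{linear} colorings $c(v)=\langle w,v\rangle\bmod 3$: such a $c$ descends to $G$ iff $w$ is orthogonal mod $3$ to both columns, and it is proper iff $w_i\not\equiv 0\pmod 3$ for every $i$. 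Thus a linear $3$-coloring exists exactly when there is $w\in(\F_3^\times)^m$ with $\overline{M_X}^{\,\top}w\equiv 0$, i.e.\ a choice of coefficients in $\{1,2\}$ balancing the rows of $M_X$ to zero in $\F_3^2$.

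\textbf{Classification and the balancing count.}
The rows of $M_X\bmod 3$ are vectors of $\F_3^2$, each lying (if nonzero) on one of the four lines of $\mathbb{P}^1(\F_3)$. The operations that preserve $X$ --- right multiplication of $M_X$ by $\mathrm{GL}_2(\Z)$ (change of basis of $\Lambda$) and signed permutations of the rows, both licensed by \cref{lem:isomorphisms} --- act on this data by permuting the four lines (through $\mathrm{GL}_2(\F_3)\twoheadrightarrow\mathrm{PGL}_2(\F_3)\cong S_4$) while fixing, as a numerical invariant, the multiset recording how many rows lie on each line, together with the number of rows that vanish mod $3$. I would classify by this multiset and solve the balancing line by line: a set of $n$ unit coefficients on a single line realizes every residue of $\F_3$ when $n\ge 2$, but only the two nonzero residues when $n=1$. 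A short computation in $\F_3^2$ then shows the balancing is solvable --- hence a linear $3$-coloring exists --- in all configurations except a finite degenerate list: essentially, when exactly one nonzero row survives mod $3$; when exactly two lines are occupied and one of them carries a single row; or when all four lines are occupied by a single row apiece.

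\textbf{The degenerate cases.}
For these configurations the linear colorings provably fail, and here the hypothesis $m\ge 5$ earns its keep: because $\Lambda$ has rank exactly $2$, the group $G$ has free rank $m-2\ge 3$, so after normalizing I may reserve one free $\Z$-direction, say a coordinate $z$, for a nonlinear correction. I would seek a \emph{semilinear} coloring $c(v)=\ell(v)+f(z)\bmod 3$ with $\ell$ linear and $f\colon\Z\to\Z/3$ periodic; the properness conditions for the finitely many generators become finitely many constraints of the form $f(z+a)-f(z)\ne b\pmod 3$, which (because in the normal form the offending generators all have $z$-component divisible by $3$) collapse to a single consistent recurrence such as $f(z+3)\equiv f(z)-1$, solvable by an explicit period-$9$ pattern. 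Building these correction functions case by case dispatches the degenerate configurations; alternatively one can project $G$ onto a suitable cyclic quotient on which all generators are nonzero and invoke the rank-one result \cref{thm:tomato}.

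\textbf{Main obstacle.}
The crux is exactly this degenerate regime. Outside it the argument is a clean $\F_3$-linear-algebra count, but the failure of linear colorings in the three exceptional configurations is genuine: small explicit examples are non-bipartite, loopless, and yet admit no homomorphism $G\to\Z/3$ avoiding the generators, so one cannot escape constructing non-homomorphic colorings by hand. I expect the delicate points to be setting up a normal form that simultaneously pins down the line-multiset and exposes a usable free coordinate, and verifying that the periodic correction can always be chosen --- and made compatible with any torsion in $G$ --- rather than merely in the sample cases. This is where I anticipate the bulk of the casework will live.
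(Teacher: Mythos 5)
Your generic case is sound, and it is worth noting that it secretly coincides with the paper's mechanism: a linear coloring $c(v)=\langle w,v\rangle \bmod 3$ with $w\in\{1,2\}^m$ orthogonal mod $3$ to both columns exists precisely when the rows of $M_X$ can be sign-balanced to $0$ in $\F_3^2$, which is the same as collapsing all $m$ rows (with signs) to a single row divisible by $3$ --- a stronger form of the pigeonhole step the paper takes in \cref{rowdiv3}, where only two rows are merged. Your classification of when the balancing fails (one nonzero row mod $3$; two occupied lines with a singleton; four singleton lines) checks out. But the proof has a genuine gap exactly where you flagged it: the degenerate configurations are asserted away, not dispatched. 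The semilinear ansatz $c(v)=\ell(v)+f(z)$ is never instantiated --- you do not produce the normal form, do not show that a functional $z$ exists with the offending generators' $z$-components divisible by $3$ and with $f$ compatible with the column lattice, and do not verify properness. These configurations are nonvacuous under your hypotheses (e.g.\ rows $(1,0),(0,1),(1,1),(1,2),(3,3)$ give a loopless, non-bipartite $5\times 2$ example admitting no homomorphic $3$-coloring), and they are genuinely delicate: for $m=4$ there exist loopless non-bipartite matrices with $\chi=4$ (the family $3\mid a+b+c$ in \cref{thm:four-by-two}), and their reductions mod $3$ land in your degenerate regime, so any correction-function construction must fail at $m=4$ and succeed at $m=5$ for a structural reason your sketch never isolates. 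Your fallback is also off target: \cref{thm:tomato} concerns rank-one Heuberger matrices, i.e.\ graphs on $\Z^m/\langle y\rangle$, not cyclic circulants; projecting $G$ onto a cyclic quotient leaves you needing to $3$-color a high-degree circulant, which is a Heuberger-type problem and not an available citation, while the row-collapse version of that idea is exactly the balancing that has just failed in these cases.

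For contrast, the paper never constructs colorings at all in this regime. It assumes $\chi(X)\geq 4$ and uses \emph{partial} row-collapse homomorphisms together with the full strength of the prior $3\times 2$ and $4\times 2$ classifications: \cref{rowdiv3} feeds \cref{lem:4-by-2-three-div-row} to force loops or a zero row, which yields two $\mathbb{Q}$-dependent rows; a unimodular normalization and \cref{lemma-L-shaped} then pin the matrix down to two rows $(1\;0)$, and a case analysis on the mod-$3$ pattern of the remaining entries, routed through \cref{cor:3-by-2-three-div-row} and the MHNF theorem \cref{3by2} (whose six exceptional $\chi=4$ families encode precisely the nonlinear colorings your ansatz would have to rediscover, ultimately resting on Heuberger's circulant theorem), produces a contradiction in every case; an induction collapsing the bottom two rows both ways handles $m>5$. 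To complete your approach you would need either to carry out and verify the semilinear constructions in all three degenerate families, or to do as the paper does and discharge them through the known small-dimension theorems via partial collapses.
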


We remark that the case when $M_X$ has a zero row is handled by \cref{lem:delete-zero-row}, which ensures that we may delete all zero rows without changing the chromatic number, whereupon a theorem for a smaller number of rows (i.e., one of Theorems \ref{thm:main}, \ref{thm:two-by-two}, \ref{3by2}, or \ref{thm:four-by-two}) can be applied.

%\todo{ABOUT THE FUTURE: Put the future work thing into a separate paragraph.  Note that for rank 3, even 3x3 includes circulant graphs with three generators, and there are are many partial results but no complete formula for the chromatic number of circulant graphs with three generators is known.}

\subsubsection{The future} This paper settles our main question for matrices of rank $2$, so one obvious direction for future work is in matrices with rank greater than 2. For rank 3, the square matrices include circulant graphs of degree six. While there are many partial results, a complete formula for the chromatic numbers of such circulant graphs is unknown.

\cref{thm:main} tells us that, for the rank-two case, the behavior of the chromatic number stabilizes starting at dimension $5$.  An inspection of the method of proof suggests that something similar may occur for higher-rank cases as well.  The proofs of Theorems \ref{3by2}, \ref{thm:four-by-two} and \ref{thm:main} all stem from a single central idea.  Namely, we obtain a graph homomorphism from a graph with an $m\times r$ Heuberger matrix to one with an $(m-1)\times r$ Heuberger matrix whenever we merge two rows by adding or subtracting them --- see \cref{lem:homomorphisms}.  In this way we obtain upper bounds on chromatic numbers of graphs with larger matrices from those with smaller matrices.  There are $2{m\choose 2}$ such homomorphisms.  With each homomorphism comes the potential to establish an upper bound of $3$ on the chromatic number of the original graph.  Fixing $r$ and letting $m$ increase, we have more and more such potential.  We therefore offer the following conjecture:

\begin{Guess}\label{conjecture}
Let $r$ be a positive integer.  Then for all sufficiently large $m$, if $M_X$ is an $m\times r$ integer matrix and $X=M_X^{SACG}$ (the Abelian Cayley graph associated with $M_X$), and $M_X$ has no zero rows and $X$ is not bipartite and has no loops, then $\chi(X)=3$.\end{Guess}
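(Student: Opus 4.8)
The lower bound is immediate: a loopless graph with $\chi\le 2$ is bipartite, so the hypotheses force $\chi(X)\ge 3$, and the entire content is the upper bound $\chi(X)\le 3$. Since a proper $3$-colouring is exactly a graph homomorphism $X\to K_3$, I would first try to build one from a group homomorphism $\psi\colon G\to\Z/3$. Writing $G=\Z^m/M_X\Z^r$ with generators $g_1,\dots,g_m$ the images of the standard basis, any $\psi$ given by a vector $a\in(\Z/3)^m$ with $a^{\mathsf T}M_X\equiv 0\pmod 3$ and $a_k\neq 0$ for all $k$ yields $c(x)=\psi(x)$ as a proper $3$-colouring, because adjacent vertices differ by $\pm g_k$ and $\psi(\pm g_k)=\pm a_k\neq 0$. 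Thus $\chi(X)\le 3$ whenever the left kernel of $M_X$ over $\F_3$ contains a full-support vector. Let $W\subseteq\F_3^m$ be the $\F_3$-span of the columns of $M_X$, so this kernel is $W^\perp$. Expanding the indicator of $W^\perp$ over $W$ gives the exact count $N$ of full-support vectors in $W^\perp$:
\[
N \;=\; \frac{2^m}{|W|}\Bigl(1+\!\!\sum_{0\neq w\in W}\!\!\bigl(-\tfrac12\bigr)^{\operatorname{wt}(w)}\Bigr),
\]
so this route succeeds exactly when $\sum_{0\neq w\in W}(-1/2)^{\operatorname{wt}(w)}>-1$, which holds in particular whenever the minimum weight of $W$ exceeds roughly $r\log_2 3$.

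The plan therefore splits into a generic case and an obstructed case. Generically $W$ has no very-low-weight vectors and the mod-$3$ homomorphism colours $X$ outright. The obstructed case is driven by low-weight elements of $W$; the extreme instances are weight-one vectors $e_k\in W$, i.e.\ ``mod-$3$ loops,'' which force $a_k=0$ for every $a\in W^\perp$. These are compatible with the hypotheses — a genuine loop needs $e_k\in M_X\Z^r$ over $\Z$, which is strictly stronger — and, crucially, they are \emph{not} removed by the row-merge homomorphisms of \cref{lem:homomorphisms}, since merging two rows maps a unit vector of $W$ to a unit vector of the image. For such graphs one must colour by other means. My main structural idea is a dichotomy: I would show that failure of the character-sum criterion forces the generator lattice to split, so that $G\cong G'\times G''$ with the connection set respecting the splitting, hence $X\cong X'\squareop X''$; then $\chi(X)=\max(\chi(X'),\chi(X''))$ and one recurses on the two factors, each of smaller rank or dimension. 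A worked obstructed example $g_1=-3g_2$ behaves exactly this way: it splits off a bipartite factor $\Cay(\Z,\{\pm1,\pm3\})$ and a factor of one smaller rank with chromatic number $3$, giving $\chi=\max(2,3)=3$.

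Around this dichotomy I would run an induction on $m$ for fixed $r$. Whenever $X$ is indecomposable I apply the generic mod-$3$ colouring; whenever the character-sum criterion fails I invoke the splitting and recurse. To reach a bounded-dimension base regime I would use \cref{lem:homomorphisms} to pass from an $m\times r$ matrix to an $(m-1)\times r$ one, deleting any zero rows via \cref{lem:delete-zero-row} and reducing the rank via \cref{lem:isomorphisms} whenever the dimension drops below $r$; eventually one lands in the rank-$\le 2$ cases settled by \cref{thm:tomato}, \cref{thm:two-by-two}, \cref{3by2}, \cref{thm:four-by-two}, and \cref{thm:main}. The point of taking $m$ large is that there are $2\binom{m}{2}$ available merges, so a pigeonhole count over the boundedly-many obstructions (created loops, re-introduced low-weight mod-$3$ structure, accidental bipartiteness) should always leave a dimension-reducing merge that preserves looplessness and non-bipartiteness.

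The hard part will be the dichotomy: proving that \emph{every} loopless, non-bipartite, indecomposable $X$ with $m$ large relative to $r$ satisfies the character-sum criterion — equivalently, that low-weight mod-$3$ structure in $W$ can arise only from a genuine Cartesian splitting. The count above shows the obstructed examples sit exactly on the boundary $N=0$, so the argument must be tight, and one may be forced to colour through $\Z/p$ for several primes $p$ (then reduce the resulting circulant factor to three colours) rather than using $p=3$ alone. Establishing the bounded-dimension base cases uniformly in $r$ is the second serious gap. It is precisely the absence of a proof of this dichotomy and of uniform base cases — as opposed to the $r\le 2$ instances, where the low-dimension theorems supply the base by hand — that keeps the statement at the level of a conjecture.
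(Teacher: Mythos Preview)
This statement is a \emph{conjecture} in the paper, not a theorem: the paper does not prove it for general $r$, but only offers it as a conjecture motivated by the $r\le 2$ cases (settled by \cref{thm:tomato} and \cref{thm:main}). There is thus no paper proof to compare against, and you correctly recognise this, closing your proposal by naming the gaps that keep the statement conjectural.

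Your strategy is still worth contrasting with the paper's method for the proven $r=2$ case. The paper works by row-merging induction: from an $m\times 2$ matrix it merges rows to reach smaller matrices, invokes the ``row divisible by $3$'' lemmas (\cref{cor:3-by-2-three-div-row}, \cref{lem:4-by-2-three-div-row}) to force the image graph to have $\chi\le 3$ or loops, and then analyses the loop configurations by hand. You instead try to build a $3$-colouring directly as a group homomorphism to $\Z/3$ via the character-sum count of full-support vectors in $W^\perp$; that formula is correct and gives a clean sufficient criterion. Both approaches hinge on mod-$3$ structure (the paper's ``row divisible by $3$'' is exactly a zero row of $M_X$ over $\F_3$), but yours is a direct global colouring while the paper's is a reduction-and-contradiction argument.

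The genuine gap is your dichotomy. You assert that failure of the character-sum criterion forces a Cartesian splitting $X\cong X'\squareop X''$, but give no argument, and the example $g_1=-3g_2$ does not in general produce such a splitting: a single relation among two generators constrains the group but does not by itself make the connection set decompose as $(S'\times\{0\})\cup(\{0\}\times S'')$ once the remaining generators and relations are present. The paper's $5\times 2$ proof shows that when the mod-$3$ route is obstructed one must instead do substantial ad hoc work with linear dependence and loop analysis, not invoke a product structure. Likewise, your pigeonhole claim that for large $m$ some merge avoids all boundedly-many obstructions is plausible heuristics but not an argument. So this is a reasonable, honestly-labelled research outline, but it is neither a proof nor the paper's approach.
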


We remark that \cref{conjecture} holds for both $r=1$ and $r=2$, as shown in \cref{thm:tomato} and \cref{thm:main}, respectively.

\subsection{Preliminaries} \label{subsec:Prelims}
In this section we quickly sketch the basic correspondence between Abelian Cayley graphs and integer matrices.  For more details, see \cite{Cervantes-matrix-method}.
%Cayley Graph Construction 

Let $G$ be a finitely generated Abelian group, and let $S=\{ \pm s_1, \pm s_2, \dots,\pm s_m \}$ be a symmetric subset of $G$ that generates $G$.  (In this paper, for Abelian groups we always use additive notation.) Let $e_k$ denote the $k$th standard basis vector in $\Z_m$, that is, the vector whose $k$th component is $1$ and with every other component equal to $0$.  We define a homomorphism $\phi:\Z^m\to G$ where $e_k\mapsto s_k$.
If $H=\ker\phi$, then $\phi$ induces a natural graph homomorphism from $X=Cay(\Z^m/H,\{ H\pm e_1, \dots, H\pm e_m\})$ to $Cay(G,S)$. Suppose that we have $y_1,\dots,y_r\in \Z^m$ such that $H=\langle y_1,\dots,y_r\rangle$. 
Then, to represent the graph $X$ we can write the $m\times r$ matrix $M_X$ where the $j$th column is $y_j$. We say that $M_X$ is a (non-unique) \emph{Heuberger matrix} of $X$. This construction can also go the other way: given any integer matrix $M$, there exists an Abelian Cayley graph $X$ with $M$ as a Heuberger matrix.  In this case we write $X=M^{SACG}$, and we refer to $X$ as the SACG associated to $M$.  (SACG stands for ``standardized Abelian Cayley graph.'')

\begin{Lem}\label{lem:loops}
Let $M$ be an integer matrix, and let $X=M^\text{SACG}$.  Then $X$ has loops if and only if for some $i$ the standard basis vector $e_i$ is in the $\Z$-span of the columns of $M$.
\end{Lem}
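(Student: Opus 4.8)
The plan is to unwind the definitions and reduce the statement to an elementary fact about cosets. Write $H$ for the $\Z$-span of the columns of $M$, so that by construction $X = \Cay(\Z^m/H, T)$ with generating set $T = \{H \pm e_1, \dots, H \pm e_m\}$ inside the group $\Z^m/H$. The guiding observation is that in any Cayley graph $\Cay(\Gamma, T)$ the edges incident to a vertex $v$ are exactly the pairs $\{v, v+t\}$ for $t \in T$, and such an edge is a loop precisely when $t$ is the identity. Hence the first reduction is: $X$ has a loop if and only if the identity coset $H$ belongs to $T$.

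The next step is to translate ``$H \in T$'' into a condition on the $e_i$. Since $T = \{H \pm e_1, \dots, H \pm e_m\}$, we have $H \in T$ exactly when $H \pm e_i = H$ for some index $i$ and some choice of sign, i.e.\ when $\pm e_i \in H$. Because $H$ is a subgroup, $e_i \in H$ if and only if $-e_i \in H$, so this simplifies to the single condition that $e_i \in H$ for some $i$. Recalling that $H$ is by definition the $\Z$-span of the columns of $M$, this is literally the assertion that some standard basis vector lies in the $\Z$-span of the columns, which is the lemma.

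To make the two directions explicit: for the forward implication one takes a loop, extracts the generator $H \pm e_i$ responsible for it, and concludes $\pm e_i \in H$, hence $e_i \in H$; for the reverse implication, given $e_i \in H$ one observes that the edge from the vertex $H$ to $H + e_i = H$ is a loop (and, by vertex-transitivity of Cayley graphs, there is in fact a loop at every vertex). Chaining the equivalences completes the argument.

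Because the proof is essentially bookkeeping, there is no genuine obstacle; the only point deserving care is being precise about what a loop means in this Cayley-graph formalism. Specifically, loops arise \emph{only} from the identity occurring as a generator, and not from two distinct generators coinciding modulo $H$ (which would merely produce a multi-edge rather than a loop). Keeping this distinction straight is exactly what makes the reduction to ``$\pm e_i \in H$'' an equivalence rather than a one-sided implication.
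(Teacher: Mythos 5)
Your proof is correct. The paper states this lemma without proof (it belongs to the background correspondence imported from \cite{Cervantes-matrix-method}), and your argument is the natural definitional unwinding one would expect: a loop exists if and only if the identity coset $H$ occurs in the generating set $\{H\pm e_1,\dots,H\pm e_m\}$, which happens if and only if $\pm e_i\in H$ for some $i$, which (since $H$ is a subgroup) is equivalent to $e_i$ lying in the $\Z$-span of the columns of $M$ --- and your closing remark correctly isolates the one genuine subtlety, namely that distinct generators coinciding modulo $H$ produce multi-edges rather than loops, so only the identity coset itself can create a loop.
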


%Homomorphisms 
\begin{Lem}\label{lem:isomorphisms}
    Let $X$ and $X'$ be Abelian Cayley graphs with Heuberger matrices $M_X$ and $M_{X'}$, respectively.
    \begin{enumerate}
        \item If $M_{X'}$ is obtained by permuting columns of $M_X$, then $X=X'$.
        \item If $M_{X'}$ is obtained by multiplying a column of $M_X$ by $-1$, then $X=X'$.
        \item If $M_{X'}$ is obtained by replacing the $j$th column of $M_X$ with $y_j+ay_i$ ($i\not= j)$ for some $a\in \Z$, then $X=X'$.
        \item If $M_{X'}$ is obtained by deleting a column of $M_X$ that is in the $\Z$-span of the other columns, then $X=X'$. 
        \item If $M_{X'}$ is obtained by permuting rows of $M_X$, then $X$ is isomorphic to $X'$.
        \item If $M_{X'}$ is obtained by multiplying a row of $M_X$ by $-1$, then $X$ is isomorphic to $X'$.
    \end{enumerate}
\end{Lem}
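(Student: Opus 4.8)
The plan is to exploit the fact that the graph $X=M^{SACG}$ depends on the matrix $M$ only through the subgroup $H\subseteq\Z^m$ generated by its columns, together with the fixed edge data supplied by the standard basis vectors $\pm e_1,\dots,\pm e_m$. Recall that $X=\Cay(\Z^m/H,\{H\pm e_1,\dots,H\pm e_m\})$, so the vertex set is $\Z^m/H$ and two cosets $v+H$, $w+H$ are adjacent exactly when $w-v\equiv\pm e_k\pmod H$ for some $k$. This dichotomy --- column operations act on $H$, row operations act on the ambient $\Z^m$ --- is precisely what separates the equalities (1)--(4) from the isomorphisms (5)--(6).

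For parts (1)--(4) I would show that each listed column operation leaves the subgroup $H$ literally unchanged, from which $X=X'$ is immediate, since both graphs then have the same vertex set and the same adjacency relation. These are all standard facts about generating sets of a $\Z$-module: permuting or negating a generator does not change the generated subgroup (parts 1 and 2); replacing $y_j$ by $y_j+ay_i$ does not either, since $y_j=(y_j+ay_i)-ay_i$ lies in the new span and, conversely, $y_j+ay_i$ lies in the old one (part 3); and deleting a generator that already lies in the span of the others leaves the span unchanged (part 4). Each is a one-line containment argument in both directions.

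For parts (5) and (6) the key observation is that a row permutation and a row negation each correspond to applying an automorphism $\alpha$ of $\Z^m$ --- a permutation matrix in case (5), the diagonal matrix negating the $i$th coordinate in case (6) --- and that in both cases $\alpha$ maps the set $\{\pm e_1,\dots,\pm e_m\}$ onto itself. Applying $\alpha$ to the columns of $M_X$ produces $M_{X'}$, so its column span is $H'=\alpha(H)$. Since $\alpha$ is an automorphism carrying $H$ onto $H'$, it descends to a well-defined group isomorphism $\bar\alpha\colon\Z^m/H\to\Z^m/H'$, $v+H\mapsto\alpha(v)+H'$. I would then verify that $\bar\alpha$ is a graph isomorphism: if $w-v\equiv\pm e_k\pmod H$, then $\alpha(w)-\alpha(v)=\alpha(w-v)\equiv\alpha(\pm e_k)=\pm e_{k'}\pmod{H'}$ for some $k'$, so adjacency is preserved; applying the same argument to $\alpha^{-1}$ shows the inverse map is also a homomorphism, so $\bar\alpha$ is an isomorphism.

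The main point requiring care --- rather than a genuine obstacle --- is keeping straight the distinction between equality and isomorphism. In (1)--(4) the ambient group $\Z^m$ and the subgroup $H$ are untouched, so the two graphs are literally identical and we may write $X=X'$; in (5)--(6) the subgroup genuinely moves to $H'=\alpha(H)$, so the strongest assertion available is an isomorphism, realized concretely by $\bar\alpha$. A secondary bookkeeping point is fixing the convention by which ``permuting rows'' and ``negating a row'' translate into left multiplication by $\alpha$ and hence into the action $y_j\mapsto\alpha(y_j)$ on columns; once that convention is pinned down, the fact that $\alpha$ stabilizes $\{\pm e_1,\dots,\pm e_m\}$ makes the adjacency-preservation check routine.
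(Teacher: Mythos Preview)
Your argument is correct and is exactly the natural one: the graph $M^{SACG}$ depends on $M$ only through the column span $H\subseteq\Z^m$ and the fixed symmetric set $\{\pm e_1,\dots,\pm e_m\}$, so column operations (1)--(4) preserve $H$ and hence give literal equality, while the signed-permutation row operations (5)--(6) are automorphisms of $\Z^m$ stabilizing $\{\pm e_1,\dots,\pm e_m\}$ and hence induce graph isomorphisms via the quotient map. Note that the paper does not actually supply a proof of this lemma; it is stated as a preliminary result imported from \cite{Cervantes-matrix-method}, so there is no in-paper argument to compare against, but your approach is the standard one and matches what that reference does.
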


For integer matrices $M_X$ and $M_{X'}$, we write $M_X\cong M_{X'}$ to indicate that their associated SACGs are isomorphic as graphs.

An integer matrix is \emph{unimodular} if it is square and has determinant $\pm 1$.  A \emph{signed permutation matrix} is a square matrix such that for every row and for every column, there is exactly one nonzero entry, which is $\pm 1$.  Beginning with an integer matrix $M_X$ and performing a finite sequence of actions as in (1), (2), and (3) in \cref{lem:isomorphisms} is equivalent to multiplying $M_X$ on the right by a unimodular matrix.  Beginning with an integer matrix $M_X$ and performing a finite sequence of actions as in (5) and (6) in \cref{lem:isomorphisms} is equivalent to multiplying $M_X$ on the left by a signed permutation matrix.  

Recall that a \emph{graph homomorphism} is a function $f$ from the vertex set of a graph $X$ to the vertex set of a graph $Y$ such that if $v$ and $w$ are adjacent in $X$, then $f(v)$ and $f(w)$ are adjacent in $Y$.  The following lemma is well-known.

\begin{Lem}\label{lem:general-upper-bound}
    If there is a graph homomorphism from $X$ to $Y$, then $\chi(X)\leq \chi(Y)$.
\end{Lem}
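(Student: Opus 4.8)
The plan is to pull back a proper coloring of $Y$ along the given homomorphism. First I would dispose of the trivial case: if $\chi(Y)=\infty$ (equivalently, under the convention adopted above, $Y$ has a loop), then the asserted inequality $\chi(X)\le\chi(Y)$ holds vacuously. So I may assume $\chi(Y)=k$ for some finite positive integer $k$.

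Next, fix a graph homomorphism $f$ from $X$ to $Y$ together with a proper coloring $c\colon V(Y)\to\{1,\dots,k\}$ that realizes $\chi(Y)=k$. I would take as the candidate coloring of $X$ the composite $c\circ f\colon V(X)\to\{1,\dots,k\}$, which uses at most $k$ colors by construction.

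The one point requiring verification is that $c\circ f$ is in fact a proper coloring of $X$. Suppose $v$ and $w$ are adjacent vertices of $X$. By the defining property of a graph homomorphism, $f(v)$ and $f(w)$ are adjacent in $Y$. Since $k=\chi(Y)$ is finite, $Y$ has no loops, so $f(v)$ and $f(w)$ are distinct vertices of $Y$, and the properness of $c$ forces $c(f(v))\ne c(f(w))$. Hence $(c\circ f)(v)\ne(c\circ f)(w)$, so $c\circ f$ properly colors $X$ with at most $k$ colors, yielding $\chi(X)\le k=\chi(Y)$.

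Because the argument is entirely elementary, I do not expect any genuine obstacle. The only subtlety is the bookkeeping around the $\infty$ convention for graphs with loops, which is precisely why I separate out the case $\chi(Y)=\infty$ at the outset; once that is set aside, the composition argument is immediate.
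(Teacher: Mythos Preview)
Your argument is correct and is exactly the standard pull-back proof of this well-known fact; the paper does not actually supply a proof, stating only that the lemma is well-known and remarking separately on the $\chi(Y)=\infty$ case, which you also handle.
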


We remark that \cref{lem:general-upper-bound} applies even in this case where $Y$ has loops; but then we get that $\chi(X)\leq \infty$, which furnishes no useful information.

We now provide some standard graph homomorphisms between Abelian Cayley graphs.

\begin{Lem}\label{lem:homomorphisms}
    The following operations on integer matrices induce graph homomorphisms on the corresponding Abelian Cayley graphs:
    \begin{enumerate}
        \item Any of the transformations from \cref{lem:isomorphisms}
        \item Reducing a column by a common integer factor
        \item Collapsing the top two rows by adding them
        \item Appending an arbitrary column
        \item Appending a zero row
        \item Any composition of the above
    \end{enumerate}
\end{Lem}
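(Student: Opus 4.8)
The plan is to treat all five basic operations through a single unifying device: a homomorphism of the ambient lattices that carries each standard basis vector to a signed standard basis vector, together with a compatible containment of the defining subgroups. Recall that for an $m\times r$ matrix $M$ with columns $y_1,\dots,y_r$, writing $H=\langle y_1,\dots,y_r\rangle\subseteq\Z^m$, the graph $M^{SACG}$ has vertex set $\Z^m/H$, with $H+v$ adjacent to $H+w$ exactly when $w-v\equiv\pm e_i\pmod H$ for some $i$.

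First I would isolate the following master principle. Suppose $\psi\colon\Z^m\to\Z^n$ is a group homomorphism with $\psi(e_i)\in\{\pm e_1',\dots,\pm e_n'\}$ for every $i$ (here $e_j'$ denotes the standard basis of $\Z^n$), and suppose $H\subseteq\Z^m$ and $H'\subseteq\Z^n$ are subgroups with $\psi(H)\subseteq H'$. Then $H+v\mapsto H'+\psi(v)$ is well defined (this is precisely the condition $\psi(H)\subseteq H'$) and is a graph homomorphism from $M^{SACG}$ to $(M')^{SACG}$, where the columns of $M$ generate $H$ and those of $M'$ generate $H'$. The verification is a one-line computation: if $w-v\equiv\pm e_i\pmod H$, then $\psi(w)-\psi(v)\equiv\pm\psi(e_i)\pmod{H'}$, and $\psi(e_i)=\pm e_j'$ is one of the generators of the target, so the images are adjacent. (When $e_j'\in H'$ the image ``edge'' is a loop; this is harmless, exactly as noted after \cref{lem:general-upper-bound}.)

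Then I would check that each listed operation is an instance of this principle. For (2), reducing a column $y_j=dz_j$ to $z_j$, take $\psi=\id$ and note $H=\langle\dots,dz_j,\dots\rangle\subseteq\langle\dots,z_j,\dots\rangle=H'$. For (4), appending a column, again $\psi=\id$ and $H\subseteq H'$ since the new group is generated by the old generators together with one more. For (5), appending a zero row, take $\psi\colon\Z^m\hookrightarrow\Z^{m+1}$ the inclusion into the first $m$ coordinates, so $\psi(e_i)=e_i'$ and $H'=\psi(H)$. The case requiring the most care is (3), collapsing the top two rows: here $\psi\colon\Z^m\to\Z^{m-1}$ sends $(a_1,a_2,a_3,\dots,a_m)\mapsto(a_1+a_2,a_3,\dots,a_m)$, so that $\psi(e_1)=\psi(e_2)=e_1'$ and $\psi(e_i)=e_{i-1}'$ for $i\ge 3$; one must observe that these images are indeed all standard basis vectors of $\Z^{m-1}$ and that $H'=\psi(H)$ is precisely the group generated by the columns of the collapsed matrix, so the hypotheses of the principle hold. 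Finally, (1) splits into the column operations of \cref{lem:isomorphisms}, which fix $H$ (so $\psi=\id$, $H=H'$), and the row operations, which are signed permutations and hence invertible instances of the principle; in every case \cref{lem:isomorphisms} already provides an isomorphism, which is in particular a homomorphism. Operation (6) is immediate, since the composite of two graph homomorphisms is a graph homomorphism.

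The only genuine subtlety — and the step I would be most careful about — is the bookkeeping in (3): verifying that adding the top two rows really does correspond to the lattice map $\psi$ above, that $\psi$ sends every generator of the source to a generator of the target (the coincidence $\psi(e_1)=\psi(e_2)$ being exactly what makes ``merging'' two generators legitimate), and that the group generated by the columns of the collapsed matrix equals $\psi(H)$ rather than some larger group. Once the master principle is in place, everything else reduces to routine verification of its two hypotheses.
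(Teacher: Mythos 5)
Your proof is correct. The paper states \cref{lem:homomorphisms} without proof, deferring to \cite{Cervantes-matrix-method}, and your ``master principle'' --- a lattice homomorphism $\psi$ carrying each standard basis vector to a signed standard basis vector, with $\psi(H)\subseteq H'$, descending to a map of the quotients $\Z^m/H\to\Z^n/H'$ --- is exactly the construction underlying that correspondence, with the two delicate points handled properly: the loop caveat when $e_j'\in H'$ (consistent with the remark after \cref{lem:general-upper-bound}) and, in operation (3), the verifications that $\psi(e_1)=\psi(e_2)=e_1'$ and that the columns of the collapsed matrix generate precisely $\psi(H)$.
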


\begin{Lem}\label{lem:delete-zero-row}
Let $M_X$ and $M_{X'}$ be integer matrices with associated SACGs $X$ and $X'$, respectively.  Suppose that $M_{X'}$ is obtained from $M_X$ by deleting a zero row.  Then $\chi(M_X)=\chi(M_Y)$.
\end{Lem}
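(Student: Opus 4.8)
The plan is to realize $X$ as a Cartesian-product-type Cayley graph, with one factor equal to $X'$ and the other an infinite path, and then to compare chromatic numbers factor by factor. First I would reduce to the case where the zero row is the last row; this is harmless by part (5) of \cref{lem:isomorphisms}, since permuting rows changes the graph only up to isomorphism and hence preserves $\chi$. Write $M_X$ as $m\times r$ with last row zero, and let $H\subseteq\Z^m$ be the $\Z$-span of its columns, so that $X=\Cay(\Z^m/H,\{H\pm e_1,\dots,H\pm e_m\})$. Because the last row of $M_X$ vanishes, every column — and hence every element of $H$ — has last coordinate $0$; thus $H$ lies in the subgroup $\{v\in\Z^m:v_m=0\}$, which I identify with $\Z^{m-1}$. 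Under this identification $H$ coincides with the column span $H'$ of $M_{X'}$. Splitting off the last coordinate then gives a group isomorphism
\[
\Z^m/H \;\cong\; (\Z^{m-1}/H')\times\Z,
\]
under which $H+e_k\mapsto(H'+e_k,\,0)$ for $k<m$ and $H+e_m\mapsto(0,\,1)$. Consequently $X$ is isomorphic to the Cayley graph of $(\Z^{m-1}/H')\times\Z$ with generating set $\{(s,0):s\in S'\}\cup\{(0,\pm1)\}$, where $S'=\{H'\pm e_1,\dots,H'\pm e_{m-1}\}$ is the generating set of $X'$; that is, $X$ is the Cartesian product of $X'$ with the bi-infinite path $\Cay(\Z,\{\pm1\})$.

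With this structure in hand I would prove the two inequalities separately. For $\chi(X')\le\chi(X)$, note that $M_X$ is obtained from $M_{X'}$ by appending a zero row, so by part (5) of \cref{lem:homomorphisms} there is a graph homomorphism $X'\to X$ (concretely, the inclusion $v\mapsto(v,0)$), and \cref{lem:general-upper-bound} gives the claim. For the reverse inequality $\chi(X)\le\chi(X')$ I would split on loops. By \cref{lem:loops} together with the identification $H\cong H'$, the graph $X$ has a loop if and only if some $e_k$ with $k<m$ lies in $H$, equivalently in $H'$ — note $e_m\notin H$ since elements of $H$ have last coordinate $0$ — i.e.\ if and only if $X'$ has a loop; in that case $\chi(X)=\chi(X')=\infty$ and we are done. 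If instead $X$ (equivalently $X'$) has no loops, then since $m\ge2$ some generator $e_k$ with $k<m$ is not in $H'$, so $X'$ contains an edge and $\chi(X')\ge2$. Fixing a proper coloring $c'$ of $X'$ with colors $\{0,1,\dots,\chi(X')-1\}$, I would define $c(v,n)=c'(v)+(n\bmod 2)\pmod{\chi(X')}$ and check it is proper on $X$: along an $X'$-edge the colors differ because $c'$ is proper, and along a path-edge they differ by $\pm1\not\equiv0\pmod{\chi(X')}$ since $\chi(X')\ge2$. This exhibits a proper $\chi(X')$-coloring of $X$, giving $\chi(X)\le\chi(X')$, and combining the two inequalities yields $\chi(X)=\chi(X')$.

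The routine bookkeeping is the identification of the quotient group and its generating set; the one genuinely load-bearing step is the reverse inequality, and within it the observation that $\chi(X')\ge2$ in the loop-free case, which is what makes the $\pm1$ shift along the path direction a legitimate color change. This is exactly the feature that forces the mild hypothesis $m\ge2$: for $m=1$ a single zero row produces $X=\Cay(\Z,\{\pm1\})$ with $\chi=2$, while the empty-matrix $X'$ is a single vertex with $\chi=1$, so the statement should be read with $m\ge2$ (which is automatic in all the applications, where rows are deleted only down to one of Theorems \ref{thm:main}, \ref{thm:two-by-two}, \ref{3by2}, or \ref{thm:four-by-two}). Alternatively, one could quote the standard fact that the chromatic number of a Cartesian product equals the maximum of the factors' chromatic numbers, packaging both inequalities at once; I prefer the explicit coloring because it stays entirely within the homomorphism toolkit already set up in the paper.
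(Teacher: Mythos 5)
Your proof is correct, but there is nothing in the paper to compare it against: \cref{lem:delete-zero-row} is stated without proof, as part of the background machinery imported from the earlier papers in this program (cf.\ \cite{Cervantes-matrix-method}). Your argument is a complete, self-contained justification and is the natural one: the identification $\Z^m/H\cong(\Z^{m-1}/H')\times\Z$ is exactly right, so $X$ is the Cartesian product of $X'$ with the doubly infinite path, and your two inequalities amount to the Sabidussi fact $\chi(G\,\squareop\,H)=\max(\chi(G),\chi(H))$, which you correctly re-derive by hand --- the lower bound via \cref{lem:homomorphisms}(5) and \cref{lem:general-upper-bound}, the upper bound via the parity-shift coloring, with the loop case dispatched separately through \cref{lem:loops} (where your observation that $e_m\notin H$ is the key point making ``$X$ has loops iff $X'$ does'' go through). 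Two small remarks. First, in the loop-free case you implicitly use that $\chi(X')$ is finite so that an optimal proper coloring exists; since $X'$ may be infinite, this rests on bounded degree plus the De Bruijn--Erd\H{o}s compactness theorem, a fact the paper takes for granted throughout but which is worth a clause in a self-contained write-up. Second, your $m=1$ edge case is a genuine (if harmless) observation: as literally stated the lemma fails when the deleted row is the only row, since then $\chi(X)=2$ while $X'$ is a single vertex; in all of the paper's applications $m\geq 2$, and you are right to read the lemma with that hypothesis. You also correctly read past the typo in the statement, whose conclusion should say $\chi(X)=\chi(X')$ rather than $\chi(M_X)=\chi(M_Y)$.
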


In many cases, we indicate the existence of a homomorphism from $X$ to $Y$ as in Lemma \ref{lem:homomorphisms} without explicitly writing it out. In such cases we write $M_X\xrightarrow{\ocirc} M_Y$, where $M_X$ and $M_Y$ are Heuberger matrices of $X$ and $Y$, respectively.  One common situation in which we do so occurs when we either add or subtract two rows. In this case, we will use $\varepsilon_i$ (for some index $i$) to represent either $1$ or $-1$. For example,
$$\begin{pmatrix}
    x_{11}&x_{12}\\
    x_{21} & x_{22}
\end{pmatrix}\xrightarrow{\ocirc} \begin{pmatrix}
    x_{11}+\varepsilon_1x_{21}&x_{12}+\varepsilon_1x_{22}
\end{pmatrix}$$ describes a homomorphism where we either add the first and second row or multiply the second row by $-1$ and then add the rows.

The next few results describe the chromatic number for very simple Abelian Cayley graphs in terms of Heuberger matrices.
\begin{Lem}
    Let $M$ be an integer matrix, and let $X=M^{SACG}$.  Then $X$ is bipartite if and only if all column sums of $M$ are even.
\end{Lem}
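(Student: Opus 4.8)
The plan is to use the coordinate-sum parity map and track its behavior under the generators. Recall from \cref{subsec:Prelims} that if $H$ denotes the $\Z$-span of the columns $y_1,\dots,y_r$ of $M$, then $X=\mathrm{Cay}(\Z^m/H,\{H\pm e_1,\dots,H\pm e_m\})$, which is a connected graph (the images of the $e_i$ generate $\Z^m/H$) with base vertex the coset $H$. Define the group homomorphism $\pi\colon\Z^m\to\Z/2\Z$ by $\pi(v)=\sum_{i=1}^m v_i \bmod 2$. The crucial observation is that each generator step flips this parity: $\pi(v\pm e_i)=\pi(v)+1$ for every $i$. Note also that ``all column sums of $M$ are even'' says exactly that $\pi(y_j)=0$ for all $j$, which, since $\pi$ is a homomorphism and the $y_j$ generate $H$, is equivalent to $H\subseteq\ker\pi$.

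For the direction ``even column sums $\Rightarrow$ bipartite,'' I would assume $H\subseteq\ker\pi$ and show that $\pi$ descends to the quotient. Since $\pi$ is then constant on each coset of $H$, it induces a well-defined map $\bar\pi\colon\Z^m/H\to\Z/2\Z$. I then claim $\bar\pi$ is a proper $2$-coloring of $X$: any two adjacent vertices have the form $v+H$ and $v\pm e_i+H$, and these receive values $\pi(v)$ and $\pi(v)+1$, which differ. Hence the preimages $\bar\pi^{-1}(0)$ and $\bar\pi^{-1}(1)$ furnish a bipartition of $X$.

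For the converse, I would assume $X$ is bipartite and deduce $\pi(y_j)=0$ for each column $y_j$ by exhibiting an explicit closed walk. Writing $y_j=\sum_{i=1}^m (y_j)_i e_i$, read off a walk starting at $H$ that takes $|(y_j)_i|$ consecutive steps along $\pm e_i$ (with the sign of $(y_j)_i$) for each $i$; consecutive vertices differ by a generator and so are adjacent, and since $y_j\in H$ the walk returns to $H$, making it a closed walk of length $\sum_i|(y_j)_i|$. In a bipartite graph every closed walk alternates between the two parts and hence has even length, so $\sum_i|(y_j)_i|$ is even, forcing the column sum $\sum_i (y_j)_i$ to be even as well. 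Since this holds for every column, all column sums are even.

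Both directions are essentially routine once the parity map is in hand; the only points needing care are verifying that the $M^{SACG}$ construction yields a \emph{connected} graph with base vertex $H$ (so that the closed-walk argument applies and bipartiteness is detected by the coloring) and matching the parity of $\sum_i|(y_j)_i|$ with that of $\sum_i (y_j)_i$. I expect no genuine obstacle here: the main subtlety is simply ensuring that $\bar\pi$ is well defined, which is precisely the step where the even-column-sum hypothesis is used.
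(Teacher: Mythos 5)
Your proof is correct: the parity homomorphism $\pi(v)=\sum_{i=1}^m v_i \bmod 2$ kills $H$ exactly when all column sums are even, descends to a proper $2$-coloring of $\Z^m/H$, and the converse follows from your closed walk of length $\sum_i |(y_j)_i| \equiv \sum_i (y_j)_i \pmod 2$, which must be even in a bipartite graph. The paper itself states this lemma without proof (deferring to \cite{Cervantes-matrix-method}), and your parity argument is the standard one for this fact, so there is nothing to flag beyond noting that connectivity, while true of the SACG construction, is not actually needed for either direction.
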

%\begin{Lem}
%    Let $y_j=(y_{1j},y_{2j},\dots,y_{mj})^T\in\Z^m$. Consider $(y_1,\dots,y_r)^{SACG}_X$. Let $s_j=\sum_{i=1}^my_{ij}$ and $s_j\not= 0$ for some $j$. If $e=\gcd(s_1,\dots,s_r)>1$, then $\chi(X)\leq 3$ 
%\end{Lem}
\begin{Thm}[Tomato Cage Theorem]\label{thm:tomato}
    Let $y_1=(y_{11},\dots, y_{m1})^T\in\Z^m$ and let $X$ be the SACG defined by $(y_1)_X^{SACG}$. If $y_1=\pm e_i$ for some $i$, then $X$ has loops and cannot be properly colored. If that's not the case, then
    $$\chi(X)=\begin{cases}
        2&\text{if } y_1 \text{ has an even number of odd entries} \\
        3&\text{if } y_1 \text{ has an odd number of odd entries}
    \end{cases}$$
\end{Thm}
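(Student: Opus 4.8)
The plan is to split into the three regimes dictated by the statement and to view the rank-one graph as an iterated ``collapse'' of rows down to a single integer, exploiting the row-merging homomorphisms of \cref{lem:homomorphisms}. First, if $y_1=\pm e_i$ for some $i$, then $e_i$ lies in the $\Z$-span of the (single) column, so \cref{lem:loops} immediately produces a loop and no proper coloring exists. For the rest assume $y_1\neq\pm e_i$ for every $i$; then no $e_i$ lies in $\langle y_1\rangle$, so \cref{lem:loops} shows $X$ is loop-free, and every generator $H\pm e_i$ differs from $H$, so $X$ has at least one edge.

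Next I would dispatch the parity dichotomy through bipartiteness. The number of odd entries of $y_1$ has the same parity as the column sum $\sum_i y_{i1}$, so by the lemma characterizing bipartite SACGs (those whose column sums are all even), $X$ is bipartite exactly when $y_1$ has an even number of odd entries. In that case $X$ is a bipartite graph with an edge, so $\chi(X)=2$. When $y_1$ has an odd number of odd entries, $X$ is not bipartite, hence $\chi(X)\geq 3$, and it remains only to exhibit a proper $3$-coloring.

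For the upper bound I would collapse $y_1$ to a single row. After negating some subset of rows (an isomorphism by \cref{lem:isomorphisms}) and then repeatedly adding the top two rows (a homomorphism by \cref{lem:homomorphisms}), the $m\times 1$ matrix maps to the $1\times 1$ matrix $(d)$ with $d=\sum_i\varepsilon_i y_{i1}$ for any sign vector $\varepsilon\in\{\pm 1\}^m$ of my choosing. The SACG of $(d)$ is the cycle $C_{|d|}$, so \cref{lem:general-upper-bound} gives $\chi(X)\leq\chi(C_{|d|})$. Since $d\equiv\sum_i y_{i1}\pmod 2$ is odd no matter how the signs are chosen, $C_{|d|}$ is either an odd cycle with $\chi=3$ (when $|d|\geq 3$) or a loop (when $|d|=1$). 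Thus the whole upper bound reduces to a single combinatorial claim: \emph{some} sign choice achieves $|d|\geq 3$.

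Establishing this sign-selection claim is the main obstacle, and I would argue its contrapositive. Suppose $\bigl|\sum_i\varepsilon_i y_{i1}\bigr|=1$ for every $\varepsilon\in\{\pm 1\}^m$. Flipping a single sign $\varepsilon_j$ alters the sum by $2\varepsilon_j y_{j1}$ while keeping it in $\{-1,+1\}$, forcing $2y_{j1}\in\{-2,0,2\}$, hence $y_{j1}\in\{-1,0,1\}$ for every $j$. With all entries in $\{-1,0,1\}$, the attainable sums are exactly the integers of absolute value at most the number $k$ of nonzero entries and of the same parity as $k$; if $k\geq 3$ we could reach $\pm 3$, contradicting that every sum is $\pm 1$, so $k\leq 1$. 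Since we are in the odd case $k\geq 1$, we get $k=1$, i.e.\ $y_1=\pm e_i$, which is excluded. Therefore a sign vector with $|d|\geq 3$ exists, so $\chi(X)\leq\chi(C_{|d|})=3$, and together with the lower bound this yields $\chi(X)=3$.
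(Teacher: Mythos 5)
Your proof is correct, but there is nothing in this paper to compare it against: the Tomato Cage Theorem is stated here without proof, being imported from \cite{Cervantes-matrix-method}. That said, your argument is a legitimate self-contained derivation from the lemmas the paper does state, and it runs in exactly the spirit of the paper's own methodology for the main theorem: choose signs $\varepsilon_i\in\{\pm 1\}$, collapse rows via \cref{lem:homomorphisms} to map $X$ homomorphically onto the SACG of a smaller matrix --- here the $1\times 1$ matrix $(d)$ with $d=\sum_i\varepsilon_i y_{i1}$, whose SACG is $C_{|d|}$ --- and then apply \cref{lem:general-upper-bound}, with \cref{lem:loops} and the bipartiteness lemma handling the degenerate and two-colorable regimes. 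I checked the one genuinely nontrivial step, the sign-selection claim, and it is sound: since $d\equiv\sum_i y_{i1}\pmod 2$ is odd for every sign choice, the only bad outcome is $|d|=1$ for all $\varepsilon$; flipping a single $\varepsilon_j$ changes the sum by $2\varepsilon_j y_{j1}$ while keeping it in $\{-1,1\}$, forcing $y_{j1}\in\{-1,0,1\}$ for all $j$, and then the attainable sums over the $k$ nonzero entries are exactly $\{-k,-k+2,\dots,k\}$, so $k\geq 3$ is impossible, $k$ is odd, and $k=1$ gives $y_1=\pm e_i$, which is excluded --- so some choice achieves $|d|\geq 3$ and $\chi(X)\leq\chi(C_{|d|})=3$. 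You also correctly observe the small but necessary point that loop-freeness (via \cref{lem:loops}, $e_i\in\Z y_1$ iff $y_1=\pm e_i$) guarantees every generator is nontrivial, so $X$ has an edge and the bipartite case gives $\chi(X)=2$ exactly rather than merely $\chi(X)\leq 2$; the $m=1$ and $y_1=0$ edge cases are also consistent with your argument. In short: correct, complete at the level of detail expected, and methodologically continuous with the row-collapsing homomorphism technique this paper uses for Theorems \ref{3by2}--\ref{thm:main}.
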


\section{Previous Results}

    \subsection{$1\times 2$ Heuberger matrices}
    The result for a $1\times 2$ matrix follows from a stronger result from \cite{small-dim-rank}. 
    \begin{Lem}
        Let $y_1,\dots,y_r$ be integers, not all $0$. Suppose $X$ is a SACG defined by $(y_1,\dots,y_r)^{SACG}_X$. Let $e=\gcd(y_1,\dots,y_r)$ Then $X$ has loops if and only if $e=1$. If there are no loops, then $\chi(X)=2$ if $e$ is even and $\chi(X)=3$ if $e$ is odd.
    \end{Lem}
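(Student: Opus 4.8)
The plan is to collapse this $1\times r$ problem to the $1\times 1$ case and then quote the Tomato Cage Theorem (\cref{thm:tomato}). Write $M=(y_1\ \cdots\ y_r)$ for the Heuberger matrix, so that its columns, viewed as vectors in $\Z^1$, are exactly the integers $y_1,\dots,y_r$. Their $\Z$-span is the ideal $e\Z$, where $e=\gcd(y_1,\dots,y_r)$, and since the $y_i$ are not all $0$ we have $e\geq 1$. The single standard basis vector $e_1\in\Z^1$ is the integer $1$, which lies in $e\Z$ if and only if $e\mid 1$, i.e.\ if and only if $e=1$. By \cref{lem:loops}, this shows that $X$ has loops exactly when $e=1$, which proves the first assertion.

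Now suppose $e\geq 2$, so that $X$ has no loops. First I would run the Euclidean algorithm using integer column operations, each of which is of the form permitted by \cref{lem:isomorphisms}(3) (together with (2) for sign changes), to transform $M$ into $(e\ 0\ \cdots\ 0)$; the gcd is preserved at every step, and sign changes do not affect it. Each of the resulting zero columns lies in the $\Z$-span of the first column, so by \cref{lem:isomorphisms}(4) I may delete them one at a time. This yields $M\cong(e)$, and hence $X\cong (e)^{SACG}$.

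It remains to compute $\chi\bigl((e)^{SACG}\bigr)$. Here I would simply apply the Tomato Cage Theorem to the $1\times 1$ matrix $(e)$, viewed as the single column $y_1=(e)\in\Z^1$. Since $e\geq 2$ we have $y_1\neq\pm e_1$, and the number of odd entries of $y_1$ is $0$ when $e$ is even and $1$ when $e$ is odd. Therefore $\chi(X)=2$ if $e$ is even and $\chi(X)=3$ if $e$ is odd, as claimed. (As a sanity check, $(e)^{SACG}$ is the Cayley graph $\Cay(\Z/e\Z,\{\pm 1\})$, i.e.\ the cycle $C_e$ for $e\geq 3$ and the edge $K_2$ for $e=2$, whose chromatic numbers agree with this.)

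There is essentially no serious obstacle here; the proof is a routine reduction to a previously established result. The only points requiring care are bookkeeping ones: verifying that the column operations of \cref{lem:isomorphisms} genuinely preserve both the isomorphism type of $X$ and the value of $e$ (so that its parity is unchanged), and confirming that the leftover zero columns are legitimately deletable before invoking \cref{thm:tomato}.
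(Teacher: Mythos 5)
Your proof is correct, but note that the paper never proves this lemma at all: it is quoted as following from ``a stronger result'' in the cited reference \cite{small-dim-rank}, so there is no internal argument to match yours against. What you have done instead is give a self-contained derivation from machinery the paper does state: the loop criterion of \cref{lem:loops} (for $m=1$ the only standard basis vector is $1$, which lies in $e\Z$ exactly when $e=1$), the unimodular column operations of \cref{lem:isomorphisms}(2)--(4) to run the Euclidean algorithm across the single row and delete the resulting zero columns, and finally the Tomato Cage Theorem (\cref{thm:tomato}) applied to the $1\times 1$ matrix $(e)$, where the parity of the number of odd entries is just the parity of $e$. All the steps check out: the operations in \cref{lem:isomorphisms}(3) preserve the ideal generated by the entries and hence the gcd, a zero column is trivially in the $\Z$-span of the remaining ones (and if $r=1$ there is nothing to delete), and with $e\geq 2$ the hypothesis $y_1\neq\pm e_1$ of \cref{thm:tomato} holds. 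In effect you have replaced an external citation by an easy corollary of results already present in the paper --- a trade that costs a short paragraph and buys self-containedness; the cited source's ``stronger result'' presumably establishes more than this $1\times r$ statement, which is why the authors routed through it instead. One cosmetic remark: your reduction shows directly that $X\cong\Cay(\Z/e\Z,\{\pm 1\})$, so your closing sanity check (even cycles and $K_2$ are $2$-chromatic, odd cycles $3$-chromatic) could itself serve as the whole proof of the coloring claim, without invoking \cref{thm:tomato} at all.
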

    
    \subsection{$2\times 2$ Heuberger matrices}

    \begin{Def}
        Let $a,b,n$ be relatively prime integers where $n\neq 0$, $n\nmid a$, and $n\nmid b$. then, the graph $\text{Cay}(\Z_n,\{ \pm a,\pm b \})$ is a \emph{Heuberger circulant}, written as $C_n(a,b)$. 
    \end{Def}
    The relative primeness is so the graph is connected and the condition where $n\nmid a,b$ is so there are no loops.
    
    The chromatic numbers for Heuberger circulants was determined by Heuberger:
    \begin{Thm}[{\cite[Theorem 3]{HEUBERGER2003153}}]
        Let $C_n(a,b)$ be a Heuberger circulant. 
        $$\chi(C_n(a,b))= \begin{cases}
            2 \text{ if } a,b \text{ are both odd, but } n \text{ is even} \\
            5  \text{ if } n=\pm 5 \text{ and } a\equiv \pm 2b \mod 5\\
            4 \text{ if } n=\pm13 \text{ and }a\equiv \pm 5b \mod 13 \\
            4 \text{ if } (i) \ n\neq \pm 5 \text{ and } (ii) \ 3\nmid n \text{ and } (iii)\  a\equiv \pm 2b \mod n \text{ or } b\equiv \pm 2a \mod n \\
            3 \text{ otherwise}
        \end{cases}$$

    \end{Thm}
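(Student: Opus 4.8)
The plan is to combine Brooks' theorem (which caps $\chi$ at $4$ away from $K_5$) with explicit colorings and two families of obstructions, organized by the arithmetic of $(n,a,b)$. Throughout I use the isomorphisms of \cref{lem:isomorphisms}: $C_n(a,b)=C_n(b,a)=C_n(\pm a,\pm b)$, reduction of $a,b$ modulo $n$, and the multiplier isomorphism $C_n(a,b)\cong C_n(ua,ub)$ for any unit $u\bmod n$. A useful first observation is that every exceptional (non-bipartite, non-$K_5$) case has a generator coprime to $n$: in the doubling case $a\equiv\pm2b$ any prime dividing $\gcd(b,n)$ would also divide $a$, contradicting $\gcd(a,b,n)=1$, so $\gcd(b,n)=1$; and $n=5,13$ are prime. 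Hence after a multiplier I may assume $a=1$ and study $C_n(1,c)$, the exceptional values being $c\equiv\pm2^{\pm1}\pmod n$ (doubling, since $C_n(1,2^{-1})\cong C_n(2,1)=C_n(1,2)$) and $(n,c)=(13,\pm5)$.

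Easy boundary cases come first. A connected Cayley graph on $\Z_n$ is bipartite iff there is a homomorphism $\Z_n\to\Z_2$ carrying every generator to $1$, i.e.\ iff $n$ is even and $a,b$ are both odd; this is the $\chi=2$ clause, and otherwise the graph has an odd cycle so $\chi\ge3$. Since $C_n(a,b)$ is connected with maximum degree $4$, Brooks' theorem gives $\chi\le4$ unless the graph is $K_5$, the only complete graph of degree $4$. As $K_5$ has $5$ vertices, $C_n(a,b)\cong K_5$ iff $n=5$ and $\{\pm a,\pm b\}=\{1,2,3,4\}$, i.e.\ iff $b\not\equiv\pm a$; since multiplication by $2$ interchanges the two $\pm$-classes mod $5$, this is exactly $a\equiv\pm2b\pmod5$. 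This settles the $\chi=5$ clause and reduces everything to deciding $\chi=3$ versus $\chi=4$.

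For the lower bounds I exhibit the two obstructions. Every doubling circulant reduces by a multiplier to $C_n(1,2)=C_n^2$, the square of the $n$-cycle. In a proper $3$-coloring of $C_n^2$ each triple $\{i,i+1,i+2\}$ is a triangle, hence rainbow, which forces $c(i+3)=c(i)$ and a strictly period-$3$ pattern; this closes up around $\Z_n$ iff $3\mid n$. So for $3\nmid n$ (and $n\neq5$) we get $\chi(C_n^2)\ge4$, matching the doubling clause, while $3\mid n$ yields the honest coloring $v\mapsto v\bmod3$ and drops into the ``otherwise'' case. For the sporadic graph $C_{13}(1,5)$ one checks it is triangle-free and that its independence number is $\alpha=4$ (a finite verification: e.g.\ $\{0,2,4,6\}$ is a maximum independent set, and the Hoffman bound alone is too weak here). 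Then every color class has at most $4$ vertices, so three classes cover at most $12<13$ vertices, giving $\chi\ge\lceil13/4\rceil=4$; with Brooks this yields $\chi=4$.

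The crux — and the step I expect to be hardest — is the uniform upper bound $\chi\le3$ for every remaining graph $C_n(a,b)$ (which, if no generator is invertible, is automatically non-exceptional). The conceptual backbone is to realize $C_n(a,b)$ as the quotient of the square-grid graph $\mathrm{Cay}(\Z^2,\{\pm e_1,\pm e_2\})$ by the index-$n$ sublattice $H=\{(x,y):ax+by\equiv0\bmod n\}$, so that proper colorings correspond to $H$-periodic colorings of the grid. The checkerboard coloring descends exactly in the bipartite case, and a linear coloring $(x,y)\mapsto(\alpha x+\beta y)\bmod3$ with $\alpha,\beta\not\equiv0$ is proper on the grid and descends iff $(\alpha,\beta)$ annihilates $H\bmod3$; since $H\bmod3=\F_3^2$ when $3\nmid n$ and is a line when $3\mid n$, a descending linear coloring exists precisely when $3\mid n$ with $H\bmod3$ a diagonal line (for $C_n(1,c)$, equivalently $3\nmid c$), and it is then just $v\mapsto v\bmod3$. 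The difficulty is that the remaining cases — $3\mid n$ with $3\mid c$, and all $3\nmid n$ outside the doubling and sporadic values — admit no descending \emph{linear} coloring, so one must instead build explicit ``almost-linear,'' defected periodic colorings (as in the pattern $0,1,2,1,2,0,2,0,1$ that $3$-colors $C_9(1,3)$). Producing such a coloring for every non-exceptional $(n,c)$, and proving the construction provably breaks down only at $c\equiv\pm2^{\pm1}$ (with $3\nmid n$) and at $(13,\pm5)$, is the delicate combinatorial heart of the theorem; I would organize it by the residue of $c$ together with a descent on $n$ through the divisor homomorphisms $C_n(1,c)\to C_m(1,c\bmod m)$ for $m\mid n$ with $m\nmid c$, reducing each non-exceptional graph to a smaller $3$-colorable circulant or to an explicit base pattern.
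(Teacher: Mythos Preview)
The paper does not prove this theorem at all: it is quoted verbatim from Heuberger's original paper and used as a black box, so there is no in-paper argument to compare your proposal against.

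That said, your proposal has a genuine gap. The lower-bound half is fine: Brooks caps $\chi$ at $4$ off of $K_5$; the $K_5$ identification at $n=5$ is correct; the period-$3$ rigidity argument for $C_n(1,2)=C_n^2$ is the standard one; and the independence-number count for $C_{13}(1,5)$ is a legitimate finite check. But the theorem's content lies almost entirely in the upper bound $\chi\le 3$ for the ``otherwise'' case, and you do not prove it --- you explicitly describe it as ``the delicate combinatorial heart of the theorem'' and offer only a plan (defected periodic colorings plus a descent through divisor quotients $C_n(1,c)\to C_m(1,c\bmod m)$). That plan is not obviously sufficient: the divisor-quotient maps go in the right direction for pulling back $3$-colorings, but you give no argument that every non-exceptional $(n,c)$ actually admits a divisor $m$ landing in a $3$-colorable base case, nor do you construct the ``almost-linear'' patterns you allude to. For example, when $n$ is prime and $3\nmid n$, there are no nontrivial divisor quotients at all, so the descent step is vacuous and the entire burden falls on the explicit construction you have not supplied. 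Heuberger's own proof of this upper bound is a substantial case analysis producing such colorings; absent that, what you have written is an outline, not a proof.
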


The following comes from \cite{small-dim-rank}: Suppose $M_X$ is a $2\times 2$ integer matrix, and that $X=M_X^{SACG}$.  We can always perform row and column operations as in \cref{lem:isomorphisms} to create a lower-triangular $2\times 2$ matrix $M_{X'}$ with non-negative diagonal entries such that $X'=M_{X'}^{SACG}$ is isomorphic to $X$.

     With this, the chromatic number for any $2\times 2$ matrix $M_X$ can be computed.
    
    \begin{Thm}[\cite{small-dim-rank}]\label{thm:two-by-two}
        Let $X$ be an SACG defined by
        $$\begin{pmatrix}
            y_{11}&0\\
            y_{21} & y_{22}
        \end{pmatrix}_X^{SACG}$$
        Also, suppose that $y_{11}\geq 0$ and $y_{22}\geq 0$. Let $d=\gcd(y_{11},y_{21})$ and $e=\gcd(y_{11},y_{21},y_{22})$. 
        \begin{enumerate}
            \item If either $(i)$ $y_{22}=1$ or $(ii)$ $y_{11}=1$ and $y_{22}|y_{21}$ or $(iii)$ $y_{11}=0$ and $\gcd(y_{21},y_{22})=1$, then $X$ has loops and is not colorable.
            \item If both $y_{11}+y_{21}$ and $y_{22}$ are even, then $\chi(X)=2$
            \item If $(i)$ neither of the above holds and $(ii)$ $y_{11}=0$ or $y_{22}=0$ or $e>1$ or $y_{22}|y_{21}$, then $\chi(X)=3$.
            \item If none of the above hold, take $q\in \Z$ such that $\gcd(y_{11},y_{21}+qy_{22})=1$. Then, $\chi(X)=\chi(C_n(a,b))$, where $a=-y_{21}-qy_{22}$, $b=y_{11}$, and $n=y_{11}y_{22}$.
        \end{enumerate}
    \end{Thm}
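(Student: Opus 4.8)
The plan is to start from the lower-triangular normal form recorded just above the statement and then dispatch the four cases using the machinery already assembled. Throughout I would write $G=\Z^2/H$ with $H=\langle(y_{11},y_{21})^T,(0,y_{22})^T\rangle$, so that when $y_{11},y_{22}>0$ we have $|G|=y_{11}y_{22}$ and, by Smith normal form, $G\cong\Z_e\oplus\Z_{y_{11}y_{22}/e}$ with $e=\gcd(y_{11},y_{21},y_{22})$. Part (1) I would prove by a direct application of \cref{lem:loops}: checking when $e_1$ or $e_2$ lies in the $\Z$-span of the two columns yields exactly conditions (i)--(iii). Part (2) is immediate from the bipartiteness criterion, since the two column sums are $y_{11}+y_{21}$ and $y_{22}$.

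For parts (3) and (4) the lower bound $\chi(X)\ge3$ is free: a connected, loopless, non-bipartite graph contains an odd cycle and hence needs at least three colors, so the content is the matching upper bound, where \cref{lem:homomorphisms} and \cref{lem:general-upper-bound} do the heavy lifting. In part (3) I would split on the stated alternatives. If $y_{22}=0$, the second column vanishes and \cref{lem:isomorphisms}(4) lets me delete it, reducing to the rank-one vector $(y_{11},y_{21})^T$, to which \cref{thm:tomato} applies (non-bipartiteness forces an odd number of odd entries, giving $3$). If $y_{11}=0$ the top row vanishes, so \cref{lem:delete-zero-row} reduces to the $1\times2$ case. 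If $e>1$, I would collapse the two rows by addition (\cref{lem:homomorphisms}) to the $1\times2$ matrix $(y_{11}+y_{21},\,y_{22})$; its gcd is divisible by $e$ and, because $X$ is non-bipartite, must be odd, hence odd and strictly greater than $1$, so the $1\times2$ result gives $\chi\le3$. The delicate sub-case is $y_{22}\mid y_{21}$: a column operation (\cref{lem:isomorphisms}) diagonalizes the matrix to $\mathrm{diag}(y_{11},y_{22})$, whose SACG is the Cartesian product of the cycles $C_{y_{11}}$ and $C_{y_{22}}$, and I would finish by writing down an explicit proper $3$-coloring of this torus whenever the two lengths are not both even.

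For part (4) --- where no loops, non-bipartiteness, $y_{11},y_{22}\neq0$, $e=1$, and $y_{22}\nmid y_{21}$ all hold --- the group $G$ is cyclic of order $n=y_{11}y_{22}$, and the goal is to realize $X$ as a Heuberger circulant. I would first justify that a $q$ with $\gcd(y_{11},y_{21}+qy_{22})=1$ exists: for each prime $p\mid y_{11}$ the hypothesis $e=1$ guarantees $p$ does not divide both $y_{21}$ and $y_{22}$, so $q$ can be chosen to avoid $p$, and the Chinese Remainder Theorem handles all such $p$ simultaneously. Then I would build the map $\psi\colon\Z^2\to\Z_n$, $\psi(u,v)=au+bv$, with $a=-y_{21}-qy_{22}$ and $b=y_{11}$, check that $\psi$ kills both generators of $H$ and is onto (using $\gcd(a,b)=1$), and conclude $G\cong\Z_n$ with $e_1,e_2$ mapping to $\pm a,\pm b$. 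This identifies $X\cong C_n(a,b)$; after verifying the relative-primeness and non-loop hypotheses of a Heuberger circulant (from connectivity and looplessness of $X$), the value of $\chi(X)$ follows from Heuberger's theorem.

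The main obstacle I anticipate is the $y_{22}\mid y_{21}$ torus sub-case of part (3): producing a clean, uniformly valid $3$-coloring of $C_{y_{11}}\times C_{y_{22}}$ (equivalently, choosing signs in a row-collapse so the resulting $1\times2$ gcd is odd and exceeds $1$, dodging the degenerate value $1$) needs the most care. The bookkeeping in part (4) --- confirming that the reduction lands on a genuinely connected, loop-free circulant so that Heuberger's classification is applicable --- is the secondary technical point.
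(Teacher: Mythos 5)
The paper states \cref{thm:two-by-two} without proof --- it is imported verbatim from \cite{small-dim-rank} --- so there is no in-paper argument to compare against; judged on its own terms, your proposal is correct, and it is almost certainly the intended route, since the parameters in part (4) ($a=-y_{21}-qy_{22}$, $b=y_{11}$, $n=y_{11}y_{22}$) are exactly the ones produced by the isomorphism $X\cong C_n(a,b)$ that you construct: your map $\psi(u,v)=au+bv$ kills both columns (indeed $ay_{11}+by_{21}=-qy_{11}y_{22}\equiv 0$ and $by_{22}=n\equiv 0$ mod $n$), is onto since $\gcd(a,b)=\gcd(y_{11},y_{21}+qy_{22})=1$, and the cardinality count $|\Z^2/H|=y_{11}y_{22}=n$ forces $\ker\psi=H$, after which equality of chromatic numbers is immediate from the graph isomorphism (Heuberger's classification is only needed to \emph{evaluate} $\chi(C_n(a,b))$, not to prove the stated equality). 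Your remaining cases check out as well: part (1) is a direct span computation via \cref{lem:loops}, and the union of $(i)$--$(iii)$ matches the span condition exactly, which is also what legitimizes your odd-cycle lower bound in parts (3) and (4) (failure of (1) really does mean looplessness); in part (3) the reductions via \cref{lem:delete-zero-row}, \cref{lem:isomorphisms}(4), and the row-collapse of \cref{lem:homomorphisms} are sound, including the observation that non-bipartiteness forces the collapsed gcd to be odd while $e\mid\gcd(y_{11}+y_{21},y_{22})$ keeps it above $1$. The one place you flag as the main obstacle --- the $y_{22}\mid y_{21}$ torus subcase --- is easier than you anticipate: after diagonalizing to $\mathrm{diag}(y_{11},y_{22})$ (legitimate since $y_{22}\neq 0$ there, the $y_{22}=0$ case having been handled separately, and $y_{11},y_{22}>1$ by the loop exclusions), take proper colorings $f$ of $C_{y_{11}}$ and $g$ of $C_{y_{22}}$ with colors in $\Z_3$ (alternate $0,1$ on an even cycle; $0,1,0,1,\dots,2$ on an odd one) and color $(u,v)\mapsto f(u)+g(v)\bmod 3$; each edge of the Cartesian product changes exactly one coordinate by a nonzero residue mod $3$, so this is a proper $3$-coloring with no case analysis --- the standard argument (Sabidussi) that the chromatic number of a Cartesian product is the maximum of the factors' chromatic numbers.
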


    \subsection{$3\times 2$ Heuberger matrices}

We begin with a technical lemma about $3\times 2$ matrices that we'll use later in the proof of the main theorem.

\begin{Lem}\label{lemma-L-shaped}Suppose we have $X=M_X^{SACG}$, where \[M_X=\begin{pmatrix}
    y_{11} && 0 \\
    y_{21} && 0 \\
    y_{31} && y_{32} 
    \end{pmatrix},\]  
    
    and that $y_{11},y_{21},y_{32}>0$ and $-\frac{y_{32}}{2}\leq y_{31}\leq 0$.
    
    Then:
    
\begin{enumerate}

    \item We have that $X$ has loops if and only if $y_{32}=1$.
    
    \item We have that $\chi(X)=2$ if and only if $y_{11}+y_{21}+y_{31}$ and $y_{32}$ are both even.
    
    \item We have that $\chi(X)=4$ if and only if $y_{11}=y_{21}=-y_{31}=1$ and $3\nmid y_{32}$ and $y_{32}>1$.
    
    \item Otherwise, $\chi(X)=3$.

\end{enumerate}
\end{Lem}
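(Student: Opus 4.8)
The plan is to dispatch parts (1) and (2) immediately from the general lemmas, record the easy lower bound $\chi(X)\ge 3$, and then concentrate the real effort on separating $\chi=3$ from $\chi=4$. For part (1), the columns of $M_X$ are $(y_{11},y_{21},y_{31})^T$ and $(0,0,y_{32})^T$; since $y_{11},y_{21}>0$, no integer combination can equal $e_1$ or $e_2$, while $e_3$ lies in the $\Z$-span exactly when $y_{32}=1$, so \cref{lem:loops} gives part (1). For the rest assume $y_{32}\ge 2$. The column sums are $y_{11}+y_{21}+y_{31}$ and $y_{32}$, so the criterion that $X$ is bipartite iff all column sums of $M_X$ are even, together with the fact that a connected bipartite graph is $2$-chromatic, yields part (2); and whenever the two column sums are not both even, $X$ is connected and non-bipartite, hence contains an odd cycle and has $\chi(X)\ge 3$. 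Thus in parts (3) and (4) it remains only to decide, under the standing assumption that $X$ is not bipartite and has no loops, whether $\chi(X)$ equals $3$ or $4$.

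For the upper bounds I would use row collapses. By \cref{lem:isomorphisms}(5)--(6) we may bring any chosen pair of rows to the top and adjust signs, and then \cref{lem:homomorphisms}(3) collapses them; this produces a graph homomorphism from $X$ onto the SACG of a $2\times 2$ integer matrix, so by \cref{lem:general-upper-bound} the chromatic number of any such $2\times 2$ graph is an upper bound for $\chi(X)$. Each $2\times 2$ target can be lower-triangularized and fed into \cref{thm:two-by-two}, and thence into Heuberger's formula. Collapsing the top two rows with a sign $\varepsilon$ gives $\left(\begin{smallmatrix} y_{11}+\varepsilon y_{21}&0\\ y_{31}&y_{32}\end{smallmatrix}\right)$, while collapsing a top row with the bottom row gives targets with two nonzero columns that re-triangularize. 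The claim to verify is that \emph{every} non-exceptional configuration admits at least one of these finitely many collapses whose $2\times 2$ graph is $2$- or $3$-chromatic, forcing $\chi(X)\le 3$. This is a finite but tedious bookkeeping exercise: one checks that Heuberger's $4$- and $5$-chromatic families ($n=\pm5$ with $a\equiv\pm2b$; $n=\pm13$ with $a\equiv\pm5b$; and $3\nmid n$ with $a\equiv\pm2b$ or $b\equiv\pm2a$) can be escaped by some available sign/row choice unless $y_{11}=y_{21}=-y_{31}=1$ and $3\nmid y_{32}$, using the constraint $-y_{32}/2\le y_{31}\le 0$ to control residues. In the exceptional configuration, collapsing the top two rows by addition yields $\left(\begin{smallmatrix}2&0\\-1&y_{32}\end{smallmatrix}\right)$, whose SACG is $C_{2y_{32}}(1,2)$; Heuberger's formula makes this $4$-chromatic exactly when $3\nmid y_{32}$ (and $3$-chromatic when $3\mid y_{32}$), which both furnishes $\chi(X)\le 4$ and is consistent with part (4). (Collapsing by subtraction instead produces a zero row, hence a loop after deletion, giving no bound.)

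The conceptual crux, and the main obstacle, is the matching lower bound $\chi(X)\ge 4$ in the exceptional case. Here I would first identify the graph concretely: with $y_{11}=y_{21}=1$, $y_{31}=-1$, and $n:=y_{32}$, the relations $e_3=e_1+e_2$ and $n e_3=0$ show that the group is $\Z\times\Z_n$ and that $X$ is the triangular-lattice cylinder $T_n:=\Cay(\Z\times\Z_n,\{\pm(1,0),\pm(0,1),\pm(1,-1)\})$. I would prove that $T_n$ is $3$-colorable if and only if $3\mid n$ via a rigidity argument on $T_\infty:=\Cay(\Z^2,\{\pm(1,0),\pm(0,1),\pm(1,-1)\})$. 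In $T_\infty$, each new vertex added to a partial coloring along the lattice is adjacent to two already-colored vertices spanning an edge, so in a proper $3$-coloring its color is forced; hence a proper $3$-coloring of $T_\infty$ is determined by its values on a single triangle. The six linear colorings $c(x,y)=\pm(x-y)+k$ with $k\in\Z_3$ are proper and realize all $3!$ colorings of a base triangle, so \emph{every} proper $3$-coloring of $T_\infty$ is one of these six linear maps.

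Finally I would descend this rigidity to the cylinder. For $n\ge 2$ the quotient $\Z^2\to\Z\times\Z_n$ reducing the second coordinate mod $n$ is a graph homomorphism that identifies no pair of adjacent vertices (as $X$ has no loops), so any proper $3$-coloring of $T_n$ pulls back to a proper $3$-coloring of $T_\infty$ that is $n$-periodic in $y$. By the rigidity just established this pullback is linear, $c(x,y)=\pm(x-y)+k$, and $n$-periodicity forces $\mp n\equiv 0\pmod 3$, i.e. $3\mid n$. Hence when $3\nmid y_{32}$ the graph $T_n$ admits no proper $3$-coloring, so $\chi(X)\ge 4$; combined with $\chi(X)\le 4$ this gives part (3), while in every remaining non-exceptional, non-bipartite, loop-free case the upper-bound analysis together with $\chi(X)\ge 3$ leaves $\chi(X)=3$, which is part (4). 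I expect the rigidity-and-descent step to be the heart of the argument, with the $2\times 2$ collapse reduction delivering all the upper bounds essentially for free.
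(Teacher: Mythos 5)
Parts (1) and (2) of your argument are correct, and your treatment of the exceptional family is genuinely nice: identifying $X$ (for $y_{11}=y_{21}=-y_{31}=1$, $y_{32}=n$) with the triangular cylinder $\Cay(\Z\times\Z_n,\{\pm(1,0),\pm(0,1),\pm(1,-1)\})$, proving six-coloring rigidity of the infinite triangular lattice by row-by-row forcing, and descending via $n$-periodicity to get $3\mid n$ is a sound, self-contained proof that $\chi(X)\geq 4$ when $3\nmid y_{32}$; the matching upper bound via the collapse to $\left(\begin{smallmatrix}2&0\\-1&n\end{smallmatrix}\right)$, whose SACG is $C_{2n}(1,2)$, is also correct (and correctly yields $\chi=3$ when $3\mid n$). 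For the record, the paper itself does not prove \cref{lemma-L-shaped}; it is imported from \cite{small-dim-rank}, so the natural comparison is with that paper's machinery (MHNF reduction and \cref{3by2}), and your rigidity-and-descent lower bound is a legitimately different and arguably more conceptual route to the hard direction of part (3).

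The genuine gap is the other half of the lemma: the claim that in \emph{every} non-exceptional, loop-free, non-bipartite case some row collapse yields a $2\times 2$ matrix whose graph is $3$-colorable. You defer this entirely to ``a finite but tedious bookkeeping exercise,'' but as stated it is not finite: $y_{11},y_{21},y_{31},y_{32}$ range over infinitely many values, and \cref{thm:two-by-two}(4) hands the collapsed matrix off to a Heuberger circulant $C_n(a,b)$ whose modulus $n=y_{11}y_{22}$ and residues $a,b$ vary with the entries, so ``escaping'' Heuberger's $4$- and $5$-chromatic families cannot be checked case-by-case on residues alone; it requires an actual argument, uniform in the parameters. You also do not address the collapses that produce loops (these give $\chi(X)\le\infty$, i.e., no bound at all), and you supply no mechanism --- such as the loop-forcing contradictions this paper deploys in Section \ref{sec:main-theorem-proof}, where failure of a $3$-bound is converted via \cref{lem:loops} into structural equations on the entries --- to rule out the possibility that every available collapse is either loopy or lands in a bad circulant family outside the exceptional configuration. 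Since part (4) and the ``only if'' direction of part (3) rest entirely on this unexecuted step, the proposal as written establishes only about half of the lemma.
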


We now turn our attention to the statement of the theorem which gives the chromatic number for graphs associated with arbitrary $3\times 2$ integer matrices.

    \begin{Def}
        Let $$M=\begin{pmatrix}
            y_{11}&y_{12} \\
            y_{21}& y_{22}\\
            y_{31}&y_{32}
        \end{pmatrix}$$ be an integer matrix with no zero rows. Then, $M$ is in \emph{modified Hermite Normal Form} (MHNF) if all the following hold:
        \begin{enumerate}
            \item $y_{11}>0$
            \item $y_{12}=0$
            \item $y_{11}y_{22}\equiv y_{11}y_{32}\mod 3$ (This is equivalent to $y_{11}\equiv 0$ or $y_{22}\equiv y_{32}\mod 3$)
            \item $y_{22}\leq y_{32}$
            \item $|y_{22}|\leq |y_{32}|$
            \item Either:
            \begin{enumerate}
                \item $y_{22}=0$ and $-\frac{1}{2}|y_{32}|\leq y_{31}\leq 0$ or
                \item $-\frac{1}{2}|y_{22}|\leq y_{21}\leq 0$
            \end{enumerate}
        \end{enumerate}
    \end{Def}
    
    Lemma 2.12 in \cite{small-dim-rank} gives a step-by-step method to turn any $3\times 2$ integer matrix $M_X$ without zero rows into a matrix $M_{X'}$ in MHNF in such a way that $M_X^{SACG}$ is isomorphic to $M_{X'}^{SACG}$.  We have a formula for the chromatic number of a $3\times 2$ matrix in MHNF, namely:
    \begin{Thm}[\cite{small-dim-rank}]\label{3by2}
        Let $X$ be a standardized Abelian Cayley graph with Heuberger matrix $$
        M_X=\begin{pmatrix}
            y_{11}&0\\
            y_{21}&y_{22}\\
            y_{31}&y_{32}
        \end{pmatrix}$$ in MHNF. 
        \begin{enumerate}
            \item If the first column is $e_1$ or the second column is $e_3$, then $X$ has loops and cannot be colored.
            \item If $y_{11}+y_{21}+y_{31}$ and $y_{22}+y_{32}$ are both even, then $\chi(X)=2$.
            \item If $$M_X=\begin{pmatrix}
                1&0\\0&1\\\pm3k&1+3k
            \end{pmatrix} \text{ or } \begin{pmatrix}
                1&0\\0&-1\\\pm3k&-1+3k
            \end{pmatrix}\text{ or } \begin{pmatrix}
                1&0\\-1&2\\-1-3k&2+3k
            \end{pmatrix}\text{ or } \begin{pmatrix}
                1&0\\-1&-2\\-1+3k&-2+3k
            \end{pmatrix}$$ for some $k\in \Z^{+}$ or $M_X=\begin{pmatrix}
                1&0\\0&-1\\3b&2
            \end{pmatrix}$ for some $b\in \Z$, or $M_X=\begin{pmatrix}
                1&0\\-1&a\\-1&a+3(k-1)
            \end{pmatrix}$ for $a\in\Z$ where $3\nmid a$ and some $k\in \Z^{+}$, then $\chi(X)=4$.
            \item If none of the above hold, then $\chi(X)=3$.
        \end{enumerate}
    \end{Thm}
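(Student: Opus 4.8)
The plan is to treat the four conclusions as a disjunction over mutually exclusive cases, bootstrapping from the $2\times 2$ theory. I work with the normalized matrix $M_X=\begin{pmatrix} y_{11}&0\\ y_{21}&y_{22}\\ y_{31}&y_{32}\end{pmatrix}$ in MHNF, so $y_{11}>0$ and the MHNF inequalities force $y_{21},y_{31}$ to be small relative to $y_{22},y_{32}$; this boundedness is ultimately what makes the exceptional list finite in its parameters. I would dispatch the loop case (1) first using \cref{lem:loops}: $X$ has a loop iff some $e_i$ lies in the $\Z$-span of the two columns. Writing out $a(y_{11},y_{21},y_{31})^T+b(0,y_{22},y_{32})^T=e_i$, the condition $y_{11}>0$ kills the leading coefficient in the $e_2,e_3$ cases, and the MHNF size bounds on $y_{21},y_{31}$ force the auxiliary coefficient to vanish; one checks that the only surviving solutions are $(y_{11},y_{21},y_{31})=e_1$ and $(y_{22},y_{32})=(0,1)$, i.e. the first column is $e_1$ or the second is $e_3$, and that $e_2$ is never in the span. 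Case (2) is then immediate from the bipartiteness criterion (all column sums even): since $y_{12}=0$, the two column sums are exactly $y_{11}+y_{21}+y_{31}$ and $y_{22}+y_{32}$.

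For the remaining loopless, non-bipartite matrices I have $\chi(X)\ge 3$ for free, and the task is to decide between $3$ and $4$. The engine for upper bounds is \cref{lem:homomorphisms} together with \cref{lem:general-upper-bound}: after permuting and negating rows, collapsing any two of the three rows yields a $2\times 2$ matrix $N$ with a homomorphism $X\to N^{SACG}$, hence $\chi(X)\le\chi(N^{SACG})$. There are $2\binom{3}{2}=6$ such targets, and each has a known chromatic number by \cref{thm:two-by-two} and Heuberger's circulant theorem stated above. The bulk of the argument is to show that, outside the listed families, at least one of these six collapses lands on a $2\times 2$ graph with chromatic number $\le 3$, which with $\chi(X)\ge 3$ forces $\chi(X)=3$. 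I would organize this by the sizes permitted by MHNF: split off the degenerate case $y_{22}=0$, where \cref{lemma-L-shaped} already gives the answer, then treat $|y_{22}|\in\{1,2\}$ and $|y_{22}|\ge 3$ separately, in each regime exhibiting an explicit collapse whose determinant and residues avoid the Heuberger exceptions $n=\pm5$, $n=\pm13$, and $a\equiv\pm2b$.

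The four-chromatic families are exactly the configurations for which every one of the six collapses is forced into a Heuberger exception or a loop, so that no collapse certifies $\chi\le3$; for these I need both bounds. For $\chi(X)\le 4$ I would pass to a convenient finite quotient: appending a column $p\,e_3$ is a homomorphism (appending an arbitrary column in \cref{lem:homomorphisms}), and for suitable $p$ this collapses the infinite cyclic quotient onto a small circulant of chromatic number $4$ — for instance the $k=1$ case of the first family reduces modulo $7$ to $C_7(1,3)$, and the $13$-exceptional configurations reduce to $C_{13}(1,5)$ — giving $\chi(X)\le 4$. The lower bound $\chi(X)\ge 4$ is the crux. When the induced connection set on the cyclic quotient contains three integers in consecutive position (e.g. distances $\{1,2,3\}$, as for $\begin{pmatrix}1&0\\0&-1\\3b&2\end{pmatrix}$), four successive vertices form a $K_4$ and $\chi\ge4$ is immediate; in the remaining families no $K_4$ is present, and I would instead exhibit a fixed finite $K_4$-free subgraph of chromatic number $4$, or equivalently run the lower-bound half of Heuberger's argument on the relevant small circulant, to rule out any proper $3$-coloring.

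The main obstacle is not any single bound but the exhaustiveness and tightness of the exceptional list: I must verify that the six collapse-targets are all bad on precisely the parametrized families written in conclusion (3), and nowhere else. Concretely, this amounts to solving, under the MHNF normalization, the simultaneous size and congruence conditions ``every collapse is a loop or a Heuberger $\ge 4$ graph,'' and confirming that the solution set is exactly those families with the stated ranges of $k$, $a$, $b$. This is a finite but delicate bookkeeping problem, since the MHNF inequalities interact with the $\bmod\,3$ condition in the MHNF definition and with the $a\equiv\pm2b\pmod n$ conditions from the circulant theorem, and one must keep the sign choices $\varepsilon_i$ and the attendant column operations consistent across all six collapses. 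I expect the $\chi\ge4$ lower bounds for the $K_4$-free families to be the hardest sub-step, because there the obstruction to $3$-colorability is global rather than witnessed by a clique.
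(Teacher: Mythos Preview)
This theorem is not proved in the present paper: it is quoted verbatim from \cite{small-dim-rank} as a prior result, and no argument for it appears here. So there is no ``paper's own proof'' to compare your proposal against; anything substantive would have to be checked against the original source.

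On its own terms, your outline is the natural strategy and matches how the surrounding results in this paper are organized: dispose of loops via \cref{lem:loops}, bipartiteness via the column-sum parity criterion, and then squeeze the remaining cases between the trivial lower bound $\chi\ge 3$ and an upper bound obtained by collapsing two rows and invoking \cref{thm:two-by-two}. Your observation that the $y_{22}=0$ case is exactly \cref{lemma-L-shaped} is correct. The genuine gap you already flag is the right one: for the six exceptional families you need $\chi(X)\ge 4$, and for the $K_4$-free ones this is not furnished by any collapse argument. Your suggestion to ``run the lower-bound half of Heuberger's argument on the relevant small circulant'' is not quite enough as stated, because a homomorphism $X\to C_n(a,b)$ gives only $\chi(X)\le\chi(C_n(a,b))$, not the reverse; you would need either an explicit odd-girth or independence-number obstruction computed directly in $X$, or a homomorphism \emph{from} a known $4$-chromatic graph \emph{into} $X$. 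Likewise, the exhaustiveness claim---that the six collapse targets are simultaneously bad precisely on the listed families---is asserted but not carried out, and the MHNF congruence condition (3) interacts with the Heuberger $\bmod\ n$ conditions in ways that require careful case-checking. These two points are where the real work lies; the rest of your plan is sound.
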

    
We refer to the various possibilities in \cref{3by2}(3) as the \emph{six exceptional cases}.

    We say that a row of an integer matrix is \emph{divisible by $n$} if all elements of the row are divisible by $n$.  If a matrix has a row divisible by $3$, then that property is preserved by the various row and column operations in \cref{lem:isomorphisms}.  None of the six exceptional cases in \cref{3by2} have a row divisible by $3$.  This yields the following corollary, which we will make use of in Section \ref{sec:main-theorem-proof}.

    \begin{Cor}\label{cor:3-by-2-three-div-row}
        Let $M_X$ be a $3\times 2$ integer matrix, and let $X=M_X^{SACG}$. Suppose that $M_X$ has no zero rows and that $X$ does not have loops. If some row of $M_X$ is divisible by 3, then $\chi(X)\leq 3$.
    \end{Cor}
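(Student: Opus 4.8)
The plan is to reduce $M_X$ to modified Hermite Normal Form (MHNF) and then read off the chromatic number from the classification in \cref{3by2}, using the divisibility hypothesis to rule out the only case in which $\chi$ can exceed $3$. First I would normalize the matrix: by Lemma 2.12 of \cite{small-dim-rank}, there is a matrix $M_{X'}$ in MHNF obtained from $M_X$ by a finite sequence of the row and column operations of \cref{lem:isomorphisms}, with $X'=M_{X'}^{SACG}$ isomorphic to $X$. Since isomorphic graphs have equal chromatic numbers, it suffices to show $\chi(X')\le 3$.

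Next I would check that all three hypotheses survive this reduction. The MHNF of a matrix with no zero rows again has no zero rows, and because $X'\cong X$ has no loops, case (1) of \cref{3by2} does not apply to $M_{X'}$. Crucially, every operation in \cref{lem:isomorphisms} preserves the property that some row is divisible by $3$: permuting or negating rows or columns merely relabels or sign-flips entries within that row, while replacing a column $y_j$ by $y_j+ay_i$ adds two $3$-divisible entries within the distinguished row and so keeps it $3$-divisible. Hence $M_{X'}$ still has a row divisible by $3$.

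Finally I would invoke \cref{3by2}, whose four cases give $\chi(X')\in\{\infty,2,4,3\}$. The value $\infty$ is excluded since $X'$ has no loops. The value $4$ would force $M_{X'}$ to be one of the six exceptional matrices of \cref{3by2}(3); but none of these has a row divisible by $3$, since in each listed matrix the first row is $e_1$ and every remaining row has at least one entry with nonzero residue modulo $3$, contradicting the preceding paragraph. Therefore $\chi(X')\in\{2,3\}$, and $\chi(X)=\chi(X')\le 3$. The only real content here is the bookkeeping of the second paragraph together with the finite case check against the explicit list in \cref{3by2}(3); both are routine, so I anticipate no genuine obstacle.
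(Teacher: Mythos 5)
Your proposal is correct and is essentially the paper's own argument: the paper derives the corollary from the observation that a row divisible by $3$ is preserved by the row and column operations of \cref{lem:isomorphisms} used in the MHNF reduction, and that none of the six exceptional cases of \cref{3by2}(3) has such a row. Your additional bookkeeping (no zero rows, loops excluded, the finite check against the exceptional list) matches what the paper leaves implicit, so there is nothing to fix.
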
  
    
    \subsection{$4\times 2$ Heuberger matrices}

Before stating the main theorem for chromatic numbers of Abelian Cayley graphs with $4\times 2$ Heuberger matrices, we first state a special case, which will be useful in the proof of this paper's main theorem in Section \ref{sec:main-theorem-proof}.

    \begin{Lem}[\cite{Eng}]\label{lem:4-by-2-three-div-row}
        Let $M_Y$ be a $4\times 2$ integer matrix, and let $Y=M_Y^{SACG}$. Suppose that $M_Y$ has no zero rows and that $Y$ does not have loops. If some row of $M_Y$ is divisible by 3, then $\chi(Y)\leq 3$.
    \end{Lem}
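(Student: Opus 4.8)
The plan is to reduce the $4\times 2$ problem to the $3\times 2$ result \cref{cor:3-by-2-three-div-row} by collapsing a pair of rows. Write the rows of $M_Y$ as $r_1,r_2,r_3,r_4\in\Z^2$; by \cref{lem:isomorphisms}(5) I may assume the row divisible by $3$ is $r_4$, say $r_4=3s$ with $s\neq 0$. Using \cref{lem:isomorphisms}(5),(6) together with \cref{lem:homomorphisms}(3), I may collapse any two of the remaining rows with a sign, obtaining $M_Y\xrightarrow{\ocirc} M_{Y'}$ whose $3\times2$ target has rows $r_i+\varepsilon r_j$, $r_k$, and $r_4=3s$, where $\{i,j,k\}=\{1,2,3\}$ and $\varepsilon\in\{1,-1\}$. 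Since $r_4$ is untouched, $M_{Y'}$ still has a row divisible by $3$, and \cref{lem:general-upper-bound} gives $\chi(Y)\le\chi(Y')$. So it suffices to choose $\{i,j\}$ and $\varepsilon$ so that $M_{Y'}$ has no zero rows and $Y'$ has no loops; then \cref{cor:3-by-2-three-div-row} yields $\chi(Y')\le 3$, and we are done.

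The observation that makes the loop condition tractable is precisely that the kept row is a multiple of $3$. By \cref{lem:loops}, $Y'$ has a loop at a given position exactly when some $a\in\Z^2$ pairs to $\pm1$ with that row and to $0$ with the other two. Pairing to $0$ against $r_4=3s$ forces $3(s\cdot a)=0$, i.e.\ $s\cdot a=0$, so every candidate loop vector lies in the rank-one lattice $L=\{a\in\Z^2:s\cdot a=0\}=\Z w$ for a primitive generator $w$. Setting the integers $\alpha_\ell=r_\ell\cdot w$, one computes that the $3s$-position can never carry a loop (as $3(s\cdot a)\neq\pm1$), a loop at the $r_k$-row forces $\alpha_k\in\{1,-1\}$ and $\alpha_i+\varepsilon\alpha_j=0$, and a loop at the collapsed row forces $\alpha_k=0$ and $\alpha_i+\varepsilon\alpha_j\in\{1,-1\}$. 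Feeding the hypothesis that $Y$ has no loops through the same lattice reduction shows that $(\alpha_1,\alpha_2,\alpha_3)$ is not a permutation of $(\pm1,0,0)$. Thus the entire loop question collapses to a statement about the three integers $\alpha_1,\alpha_2,\alpha_3$.

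To finish I choose the kept index $k$ so that $|\alpha_k|$ is maximal, then fix $\varepsilon$. If $|\alpha_k|\ge 2$, then $\alpha_k\notin\{0,\pm1\}$, so neither loop pattern can fire for any sign, and I pick $\varepsilon$ with $r_i+\varepsilon r_j\neq 0$ (possible since $r_i,r_j\neq 0$). If $|\alpha_k|=1$, the excluded permutation guarantees the other two coordinates are not both zero, whence some sign gives $\alpha_i+\varepsilon\alpha_j\neq 0$; this defeats the kept-row pattern (which needs $\alpha_i+\varepsilon\alpha_j=0$) while the collapsed-row pattern already fails because $\alpha_k\neq 0$, and the same $\varepsilon$ keeps the collapsed row nonzero because $r_i+\varepsilon r_j=0$ would force $\alpha_i+\varepsilon\alpha_j=0$. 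If $|\alpha_k|=0$, all three $\alpha_\ell$ vanish, both patterns fail for either sign, and I simply take $\varepsilon$ avoiding a zero row. This elementary case check is the main thing to get right: the point of the lattice description is both to make the analysis finite and to ensure that the sign forced by loop-avoidance never conflicts with the one needed for zero-row-avoidance.
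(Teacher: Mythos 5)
Your proposal is correct; I checked the lattice reduction and the three-way case split and found no gaps. In particular, the key mechanism is sound: any loop vector for $M_{Y'}$ (or for $M_Y$) at a row other than $3s$ must annihilate $3s$, hence lies in the rank-one lattice $L=\Z w$, which turns the loop conditions of \cref{lem:loops} into exact statements about $\alpha_\ell=r_\ell\cdot w$ (the equations $n\alpha_k=\pm1$, $n(\alpha_i+\varepsilon\alpha_j)=0$, etc., force precisely the patterns you list), and the no-loops hypothesis on $Y$ correctly excludes $(\alpha_1,\alpha_2,\alpha_3)$ being a permutation of $(\pm1,0,0)$; your choice of $k$ maximizing $|\alpha_k|$ then makes both loop patterns fail while the sign $\varepsilon$ needed to kill loops is automatically compatible with keeping the collapsed row nonzero, since $r_i+\varepsilon r_j=0$ would force $\alpha_i+\varepsilon\alpha_j=0$. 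Note, however, that this paper does not prove the statement at all: it imports it from \cite{Eng}, where it sits as a special case of the full $4\times 2$ classification (\cref{thm:four-by-two}) --- the route analogous to how the paper derives \cref{cor:3-by-2-three-div-row}, namely observing that none of the exceptional chromatic-number-4 forms $\bigl(\begin{smallmatrix}1&a\\1&b\\1&c\\0&1\end{smallmatrix}\bigr)$ has a row divisible by $3$ and that this property is invariant under the operations of \cref{lem:isomorphisms}. Your argument is genuinely different and in a sense stronger: by collapsing two of the untouched rows via \cref{lem:homomorphisms}(3) and \cref{lem:general-upper-bound} and invoking only the $3\times 2$ corollary, you derive the $4\times 2$ three-divisible-row lemma without any appeal to the classification theorem of \cite{Eng}, at the cost of the explicit loop bookkeeping; the classification route is a two-line corollary but depends on the much heavier \cref{thm:four-by-two}. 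Amusingly, your reduction mirrors the descent strategy the paper itself uses in Section \ref{sec:main-theorem-proof} to go from $5\times 2$ down to smaller matrices, so it fits the paper's methodology more closely than the cited proof does.
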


Observe the similarity between \cref{cor:3-by-2-three-div-row} and \cref{lem:4-by-2-three-div-row}.  We refer to them as the $3\times 2$ (respectively, $4\times 2$) three-divisible row lemmas.

    \begin{Thm}[\cite{Eng}]\label{thm:four-by-two}
        Let $M$ be a $4\times 2$ integer matrix, and let $X=M^{\text{SACG}}$. Furthermore suppose that $X$ is not bipartite; that $X$ has no loops; and that $M$ has no zero rows. Then $\chi(X)=4$ if and only if there exists a signed permutation matrix $P$ and unimodular matrix $U$ such that  
        $$PMU=\begin{pmatrix}
            1&a\\1&b\\1&c\\0&1
        \end{pmatrix}$$
        for $a,b,c\in \Z$ with $3\mid a+b+c$.  Otherwise, $\chi(X)=3$. 
    \end{Thm}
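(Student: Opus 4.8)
The plan is to prove both inequalities that confine $\chi(X)$ to $\{3,4\}$ and then to match the value $4$ exactly to the displayed normal form. Throughout I work up to the relation $\cong$ of \cref{lem:isomorphisms}, so signed row permutations and unimodular column operations are free. Since $X$ is connected, non-bipartite, and loop-free, it has an edge and is not $2$-colorable, so $\chi(X)\ge 3$ always; the whole content is therefore an upper bound of $3$ off the displayed form, together with a proof that the displayed form genuinely needs a fourth color.

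The organizing device is reduction modulo $3$. A \emph{linear $3$-coloring} of $X=M^{SACG}$ is a vector $\lambda\in\{1,2\}^4$ (entries in $\Z_3^\times$) with $\lambda^{\mathsf T}M\equiv 0\pmod 3$: then $x\mapsto \lambda\cdot x \bmod 3$ kills the columns of $M$, descends to $G=\Z^4/H$, and changes by $\lambda_i\neq 0$ across each generator $e_i$, so it is a proper $3$-coloring and $\chi(X)=3$. Writing $W\subseteq \Z_3^4$ for the span of the columns of $M$ mod $3$ and $U=W^{\perp}$, a linear $3$-coloring is exactly a nowhere-zero vector of $U$, i.e. a vector of $U$ meeting none of the coordinate hyperplanes $e_i^{\perp}$. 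First I would dispose of the case where some row is $\equiv 0\pmod 3$ using \cref{lem:4-by-2-three-div-row}, which gives $\chi(X)\le 3$ directly and never meets the displayed form (every row there is coprime to $3$, a property preserved by the allowed operations). Assuming all four rows are nonzero mod $3$, I view them as points of $\mathbb{P}^1(\Z_3)$ (which has four points) and count: when $\dim W=1$ a nowhere-zero $\lambda$ perpendicular to the common column direction is produced by hand; when $\dim W=2$ the plane $U$ has exactly four lines, and $U$ is covered by the $U\cap e_i^{\perp}$ — so a nowhere-zero vector fails to exist — precisely when the four rows are four \emph{distinct} directions of $\mathbb{P}^1(\Z_3)$, or when some $e_i\in W\pmod 3$ (equivalently, three rows are collinear mod $3$, making one coordinate identically zero on $U$). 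In every other configuration a linear coloring exists and $\chi(X)=3$.

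It remains to treat the two exceptional mod-$3$ configurations. If three rows are collinear mod $3$, then after a column operation sending that direction to $(1,0)$ their second entries become divisible by $3$; since the two relevant first entries are $\equiv 1$ or $2$, I can add or subtract a suitable pair of these three rows so that the merged first entry is also $\equiv 0\pmod 3$. The resulting row is divisible by $3$, so the merged $3\times 2$ matrix $N$ has $\chi(N)\le 3$ by \cref{cor:3-by-2-three-div-row} (deleting the merged row via \cref{lem:delete-zero-row} if it is genuinely zero, and choosing among the three available pairs to keep $N$ loop-free); by \cref{lem:homomorphisms} and \cref{lem:general-upper-bound} this forces $\chi(X)=3$. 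If instead the four rows are four distinct directions, I separate according to whether $M$ reduces to the displayed form. A Hermite-type normalization (analogous to the MHNF for $3\times 2$ matrices) either brings $M$ to $\begin{pmatrix}1&a\\1&b\\1&c\\0&1\end{pmatrix}$ — in which case the four-distinct-directions condition becomes "$a,b,c$ distinct mod $3$'', equivalently $3\mid a+b+c$ — or it exposes an obstruction (most visibly a row whose entries share a common factor, the gcd of a row being a $\cong$-invariant). In the latter case I again produce a row-merge whose target $N$ is non-bipartite, loop-free, and not one of the six exceptional $3\times 2$ cases, so $\chi(N)=3$ by \cref{3by2} and hence $\chi(X)=3$.

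Finally, for $M$ in the displayed form with $3\mid a+b+c$ I would establish $\chi(X)=4$. Here $G\cong\Z^2$ with $e_1=u,\ e_2=v,\ e_3=-(u+v)$ and a fourth generator $w=(c-a,c-b)$; the first three generators make the triangular lattice $T=\mathrm{Cay}(\Z^2,\{\pm u,\pm v,\pm(u+v)\})$ a spanning subgraph of $X$. The triangular lattice has, up to permutation of colors, the unique proper $3$-coloring $f(x,y)=x+y\bmod 3$ (once one triangle is colored, every color propagates by forcing around triangles), and one computes $f(w)\equiv -(a+b+c)\equiv 0\pmod 3$. Thus any proper $3$-coloring of $X$ restricts to $T$ as $\pi\circ f$ for a color permutation $\pi$, whereupon the edges in direction $w$ become monochromatic — a contradiction — so $\chi(X)\ge 4$; the matching bound $\chi(X)\le 4$ comes from any loop-free row-merge into a $3\times 2$ graph, whose chromatic number is at most $4$ by \cref{3by2}. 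Combining the three regimes yields $\chi(X)=4$ exactly on the displayed form and $\chi(X)=3$ otherwise. The main obstacle is the residual case analysis of the previous paragraph — showing that off the displayed form some row-merge always lands in a $3$-colorable $3\times 2$ graph, and carrying out the integer normalization that tethers "four distinct directions'' to the exact matrix $\begin{pmatrix}1&a\\1&b\\1&c\\0&1\end{pmatrix}$ with $3\mid a+b+c$; the triangular-lattice rigidity, though clean, is the other delicate point, since it is what upgrades the non-existence of a linear coloring into the non-existence of any $3$-coloring.
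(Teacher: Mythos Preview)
The paper does not prove \cref{thm:four-by-two}; it is quoted from \cite{Eng} as a prior result and used as an input to the proof of \cref{thm:main}. There is therefore no in-paper argument to compare against, and I can only assess your sketch on its own merits.

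The overall framework is sound and natural: organizing by the mod-$3$ column space, seeking a nowhere-zero $\lambda\in(\Z_3^\times)^4$ orthogonal to it, and reading the obstruction as a configuration of four row-directions in $\mathbb P^1(\Z_3)$ is exactly the right picture. Your lower-bound argument for the displayed form is correct: in $G\cong\Z^2$ the first three generators give the triangular lattice, whose proper $3$-coloring is unique up to color permutation and given by $f(x,y)=x+y\bmod 3$, and one computes $f(w)=2c-a-b\equiv -(a+b+c)\equiv 0\pmod 3$, so every $3$-coloring of the triangular sublattice leaves $w$-edges monochromatic.

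There are, however, two genuine gaps. First, you invoke \cref{lem:4-by-2-three-div-row} to dispose of the ``some row divisible by $3$'' case, but in this paper that lemma is itself imported from \cite{Eng} and is most naturally a corollary of the theorem you are proving; a self-contained argument must establish it independently (for instance by merging two of the other three rows into a $3\times 2$ matrix with a $3$-divisible row and appealing to \cref{cor:3-by-2-three-div-row}, while controlling loops). Second, and more seriously, the branch you yourself identify as ``the main obstacle'' is not actually argued: when the four rows hit all four directions of $\mathbb P^1(\Z_3)$ but $M$ does \emph{not} reduce to the displayed form, you assert that a Hermite-type normalization ``exposes an obstruction'' enabling a $3$-colorable row-merge, but you give no mechanism for this dichotomy, and the integer-level work tethering ``four distinct directions mod $3$'' to the exact normal form $\begin{pmatrix}1&a\\1&b\\1&c\\0&1\end{pmatrix}$ (or else producing a merge that avoids both loops and the six exceptional $3\times 2$ cases of \cref{3by2}) is precisely where the substance of the theorem lives. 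Similarly, in the ``three collinear rows'' case you claim one can always choose a merged pair so that the resulting $3\times 2$ graph is loop-free and non-exceptional, but this too requires verification. Without these steps the proof does not close.
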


\section{Proof of \cref{thm:main}}\label{sec:main-theorem-proof}

In this section we prove \cref{thm:main}.  We begin with the case where $m=5$.  This will then serve as the base of an induction that proves the theorem for all $m\geq 5$.

    \subsection{$5\times 2$ Heuberger matrices}
    The main idea of the proof for $m=5$ is to use \cref{lem:homomorphisms}(3) to obtain a graph homomorphism from our graph with a given $5\times 2$ matrix to a graph with a matrix with fewer rows.  Then we can take advantage of the main theorems for $m\times 2$ matrices with $m<5$.  In particular, \cref{lem:4-by-2-three-div-row} will set things in motion.  To make sure the conditions of that result are met, we first require a lemma.  Recall our convention that every variable $\epsilon_i$ equals $\pm 1$.

    \begin{Lem}\label{rowdiv3}
    In an arbitrary $5\times 2$ integer matrix $M_Z$, then either some row of $M_Z$ is divisible by $3$, or there exist two distinct rows $r_1,r_2$ of $M_Z$ such that $r_1 +\epsilon_1r_2$ is divisible by 3.
\end{Lem}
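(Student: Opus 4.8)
The plan is to reduce everything modulo $3$ and finish by a pigeonhole count in $\F_3^2$. First I would record the right dictionary. For integer row vectors $r_1,r_2\in\Z^2$ with mod-$3$ reductions $\bar r_1,\bar r_2\in\F_3^2$, the vector $r_1+\epsilon_1 r_2$ is divisible by $3$ for some sign $\epsilon_1\in\{\pm1\}$ if and only if $\bar r_1=\pm\bar r_2$: if $\bar r_1=\bar r_2$ take $\epsilon_1=-1$, and if $\bar r_1=-\bar r_2$ take $\epsilon_1=1$. Likewise a row is divisible by $3$ exactly when its reduction is the zero vector. Thus the entire statement is really a claim about the five reduced rows viewed as elements of $\F_3^2$.

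Next I would set up the count. If some reduced row equals $0$, the first alternative holds and we are done. Otherwise all five reductions lie among the $3^2-1=8$ nonzero vectors of $\F_3^2$. I would group these nonzero vectors by the relation $u\sim v \iff u=\pm v$. Because the only nonzero scalars of $\F_3$ are $1$ and $-1=2$, each class $\{v,-v\}$ is precisely a one-dimensional subspace with the origin deleted, and has exactly two elements (since $v\neq -v$ for $v\neq 0$ in odd characteristic). The number of such classes equals the number of lines through the origin in $\F_3^2$, namely $(3^2-1)/(3-1)=4$.

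The conclusion is then immediate: five nonzero reduced rows are distributed among only four classes of the form $\{v,-v\}$, so by pigeonhole two of them, say $r_1$ and $r_2$ with distinct row indices, have reductions in a common class. Then $\bar r_1=\pm\bar r_2$, and by the dictionary from the first step there is a sign $\epsilon_1$ with $r_1+\epsilon_1 r_2$ divisible by $3$, giving the second alternative.

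There is essentially no hard step here; the only thing to get right is the bookkeeping that the nonzero vectors of $\F_3^2$ split into exactly four sign classes, so that five nonzero rows are one more than the number of available classes. The hypothesis that $M_Z$ has five rows is exactly what makes the pigeonhole tight against these four classes, which is presumably why dimension $5$ is the right base case for the induction in \cref{thm:main}.
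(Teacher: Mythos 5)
Your proof is correct and follows essentially the same route as the paper: reduce the five rows mod $3$, note that the eight nonzero vectors of $\F_3^2$ partition into four sign classes $\{v,-v\}$ (the paper lists these classes explicitly, while you count them as lines through the origin), and apply pigeonhole to find two rows with $\bar r_1=\pm\bar r_2$, choosing $\epsilon_1$ accordingly. No gaps; your explicit ``dictionary'' between the sign condition and divisibility by $3$ is a slightly more careful write-up of the same argument.
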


\begin{proof}
    Suppose that no row of $M_Z$ is divisible by 3.  Let $r_1, r_2, r_3, r_4, r_5$ be the rows of $M_Z$.  By a harmless (we hope) abuse of notation, also let $r_i$ denote the reduction mod 3 of the $i$th row of $M_Z$.  So $r_1, r_2, r_3, r_4, r_5$ are elements of $V=\mathbb{Z}_3^2\setminus\{(0,0)\}$, which has eight elements.  Partition $V$ into four sets, where each set contains a vector and its negative.  That is, the four sets are \[\{(1,0),(2,0)\}, \{(0,1),(0,2)\} , \{(1,1),(2,2)\} , \{(1,2),(2,1)\}.\]  By Pigeonhole Principle, two of the five elements $r_1, r_2, r_3, r_4, r_5$ must belong to the same set in this partition.  Therefore two rows are either equal mod 3 (in which case, let $\epsilon_1=-1$) or else opposite mod 3 (in which case, let $\epsilon_1=1$).
\end{proof}

\begin{Thm}\label{thm:base-case-for-main-thm}
    Let $M_X$ be a $5\times 2$ integer matrix and $X=M_X^{\text{SACG}}$. Suppose that $X$ is not bipartite; that $X$ has no loops; and that $M_X$ has no zero rows. Then $\chi(X)=3$.
\end{Thm}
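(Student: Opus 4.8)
The plan is to prove the two inequalities $\chi(X)\ge 3$ and $\chi(X)\le 3$ separately. The lower bound is immediate: since $S$ generates $G$, the graph $X$ is connected, and a connected loopless graph that is not bipartite contains an odd cycle, so $\chi(X)\ge 3$. For the upper bound, the entire strategy is to construct a graph homomorphism from $X$ to a $4\times 2$ SACG $Y$ to which the $4\times 2$ three-divisible row lemma (\cref{lem:4-by-2-three-div-row}) applies; by \cref{lem:general-upper-bound} this gives $\chi(X)\le\chi(Y)\le 3$. The mechanism for producing the homomorphism is \cref{lem:homomorphisms}(3), which lets me collapse two rows into their sum, together with the sign changes permitted by \cref{lem:homomorphisms}(1), so that in effect I may collapse any two rows into $r_i+\varepsilon r_j$ and drop from a $5\times 2$ to a $4\times 2$ matrix.

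The quantity to aim for is a row divisible by $3$, precisely because of \cref{lem:4-by-2-three-div-row}, and \cref{rowdiv3} is tailored to deliver one. First I would invoke \cref{rowdiv3}. If some row of $M_X$ is already divisible by $3$, I would leave that row untouched and collapse two of the remaining four rows, so that the divisible-by-$3$ row persists in the $4\times 2$ matrix. If instead two rows $r_1,r_2$ satisfy $r_1+\varepsilon_1 r_2\equiv 0\pmod 3$ while no single row is divisible by $3$, I would collapse exactly that pair with exactly that sign, so that the newly merged row $r_1+\varepsilon_1 r_2$ becomes the divisible-by-$3$ row. In either case, to invoke \cref{lem:4-by-2-three-div-row} I must certify that the resulting $4\times 2$ graph $Y$ has no zero rows and no loops.

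Two genuine obstacles stand in the way. The first is a zero row: the merged row $r_1+\varepsilon_1 r_2$ could vanish identically, which forces $r_1=-\varepsilon_1 r_2$, i.e.\ two rows of $M_X$ coincide up to sign. When this happens I would delete the zero row via \cref{lem:delete-zero-row}, landing on a $3\times 2$ matrix whose chromatic number I would control using \cref{3by2} and \cref{cor:3-by-2-three-div-row}. The delicate point is that divisibility by $3$ may no longer be visible after deletion, so this degenerate configuration must be treated by hand --- either by exhibiting an alternative pair of rows to collapse, or by checking directly that such a matrix cannot land in one of the six exceptional $\chi=4$ cases of \cref{3by2}. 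In non-degenerate situations the sign in the collapse is free, and I would choose it to keep the merged row nonzero.

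The second and main obstacle is that collapsing two rows can create loops even when $X$ has none: by \cref{lem:loops}, a loop in $Y$ corresponds to a low-support vector in the rank-$\le 2$ integer column lattice $L$ of $M_X$ --- for example a $v\in L$ supported on $\{i,j,k\}$ with $v_k=1$ and $v_i=-\varepsilon v_j$ --- which need not itself be a standard basis vector and so is not ruled out by the hypothesis that $X$ is loopless. The key leverage is that I have many admissible collapses available (up to $2\binom{5}{2}$ in all, and at least $2\binom{4}{2}$ when a divisible-by-$3$ row is preserved passively), and each loop-creating collapse pins down one such special vector of $L$. I expect the crux of the proof to be the argument that some admissible collapse is simultaneously loop-free, zero-row-free, and divisibility-preserving: if every admissible collapse created a loop, then $L$ would be forced to contain more low-support vectors than a rank-$2$ lattice with a loopless associated graph can accommodate, a contradiction. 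Establishing this counting/contradiction step --- that a good collapse always survives --- is where the real work lies, after which \cref{lem:general-upper-bound} and \cref{lem:4-by-2-three-div-row} close out the upper bound.
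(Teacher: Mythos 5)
Your overall frame --- collapse two rows via \cref{lem:homomorphisms}(3) guided by \cref{rowdiv3}, then close with \cref{lem:4-by-2-three-div-row} and \cref{lem:general-upper-bound} --- matches the opening moves of the paper's proof, and your lower bound is fine. But the step you defer as ``where the real work lies'' (that some admissible collapse is simultaneously loop-free, zero-row-free, and divisibility-preserving) is not just unproven: as stated, it is false, so no counting argument about low-support vectors can deliver it. Consider
\[M_X=\begin{pmatrix}1&0\\-1&0\\0&1\\0&-1\\1&1\end{pmatrix}.\]
Here $M_X$ has no zero rows, $X$ is loopless (any $\mathbb{Z}$-combination of the columns has the form $(\alpha,-\alpha,\beta,-\beta,\alpha+\beta)^T$, which is never a standard basis vector) and non-bipartite (both column sums are odd), and no row is divisible by $3$. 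The only pairs of rows congruent up to sign mod $3$ are rows $1,2$ and rows $3,4$, and in each case the sign forced by $3$-divisibility produces the actual zero row; deleting that zero row via \cref{lem:delete-zero-row} leaves a $3\times 2$ matrix one of whose columns is $e_3$, so the reduced graph has loops and \cref{cor:3-by-2-three-div-row} yields nothing. Thus every $3$-divisibility-preserving collapse of $M_X$ is bad, your mechanism stalls, and yet $\chi(X)=3$. Rationally dependent pairs of rows force exactly such loop-creating low-support vectors into the column lattice without $X$ itself having loops, which is why your pigeonhole-style heuristic cannot rule this out.

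The paper resolves precisely this situation, and not by counting. It argues by contradiction from $\chi(X)\geq 4$: when the collapse supplied by \cref{rowdiv3} yields loops or a zero row, it extracts only the information that two rows of $M_X$ are linearly dependent over $\mathbb{Q}$; it then applies a \emph{unimodular column operation} --- a tool entirely absent from your plan --- to zero out two entries of one column, proves via \cref{lemma-L-shaped} that the two surviving entries $w_{42},w_{52}$ in those rows must be $\pm 1$ (using that if all four collapses $Q_{\epsilon_5,\epsilon_6}$ had loops, then $X$ itself would), and then runs a four-way case analysis mod $3$ on the resulting $4\times 2$ matrix with top row $(2\ \,0)$, invoking MHNF, the six exceptional cases of \cref{3by2}, and both three-divisible row lemmas. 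In short, the degenerate configuration you propose to handle ``by hand'' (two rows equal up to sign, or more generally rationally dependent) is not a corner case but the bulk of the proof, and closing your gap would require essentially reconstructing that structural analysis rather than the counting contradiction you sketch.
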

\begin{proof}
Let 
\[M_X=\begin{pmatrix}
    x_{11} & x_{12}\\
    x_{21} & x_{22}\\
    x_{31} & x_{32}\\
    x_{41} & x_{42}\\
    x_{51} & x_{52}
    \end{pmatrix}\]
and assume that $\chi(X)\geq 4$.  We claim that two rows of $M_X$ are linearly dependent over $\mathbb{Q}$.

Without loss of generality, we can use \cref{rowdiv3} to collapse the first two rows to get $M_X\xrightarrow{\ocirc} M_Y$ where some row of $M_Y$ is divisible by 3.  (That is: If some row of $M_X$ is divisible by $3$, then permute rows to move it to the third row, and then collapse the top two rows by adding them.  Otherwise, take two rows that are either equal or opposite mod $3$, permute rows so as to move them to the top, and then collapse them by either adding or subtracting to produce a row divisible by $3$.)  Let $Y=M_Y^{SACG}$.  By \cref{lem:4-by-2-three-div-row} and \cref{lem:general-upper-bound}, either the graph $Y$ has loops or $M_Y$ has a zero row.  In the latter case, it's immediate that two rows of $M_Y$ are linearly dependent over $\mathbb{Q}$.

So assume $Y$ has loops.  Let $y_1$ be the first column of $M_{Y}$, and let $y_2$ be the second column.  By \cref{lem:loops}, some standard basis vector $e_i$ must lie in the $\Z$-span of $y_1$ and $y_2$.  That is, $\alpha y_1 + \beta y_2=e_i$ for some integers $\alpha, \beta$.  Note that $\alpha, \beta$ are not both zero. But at least two of the bottom three entries of $\alpha y_1 + \beta y_2$ are zero. Further, these bottom three rows of $M_Y$ are also rows of $M_X$.  So those two rows give us a $2\times 2$ minor of $M_X$ with determinant $0$.

We have now shown that two rows of $M_Y$ are linearly dependent over $\mathbb{Q}$.  Assume without loss of generality that these are the bottom two rows.

Therefore, there exist $\alpha,\beta\in\mathbb{Z}$ (not both $0$) such that \[\alpha\begin{pmatrix}x_{41}\\
x_{51}\end{pmatrix}+\beta\begin{pmatrix}x_{42}\\
x_{52}\end{pmatrix}=\begin{pmatrix}0\\0\end{pmatrix}\]
Reducing by a common factor if need be, we can assume that $\alpha$ and $\beta$ are relatively prime, so there exist integers $k,\ell$ such that $\alpha k+\beta\ell=1$.

Multiply $M_X$ on the right by the unimodular matrix \[\begin{pmatrix}
    \alpha & -\ell\\
    \beta & k
\end{pmatrix}\] to get

\[M_X\cong\begin{pmatrix}
    w_{11} & w_{12}\\
    w_{21} & w_{22}\\
    w_{31} & w_{32}\\
    0 & w_{42}\\
    0 & w_{52}
    \end{pmatrix}\cong \begin{pmatrix}
    w_{42} & 0\\
    w_{52} & 0\\
    w_{12} & w_{11}\\
    w_{22} & w_{21}\\
    w_{32} & w_{31}
    \end{pmatrix}\]

for some integers $w_{ij}$.

\textit{Claim:} $|w_{42}|=|w_{52}|=1$.

\textit{Proof of claim:} Consider \[\begin{pmatrix}
    w_{42} & 0\\
    w_{52} & 0\\
    w_{12} & w_{11}\\
    w_{22} & w_{21}\\
    w_{32} & w_{31}
    \end{pmatrix}\xrightarrow{\ocirc}\begin{pmatrix}
    w_{42} & 0\\
    w_{52} & 0\\
    w_{12}+\epsilon_5 w_{22}+\epsilon_6 w_{32} & w_{11}+\epsilon_5 w_{21}+\epsilon_6 w_{31}
    \end{pmatrix}=M_{Q_{\epsilon_5, \epsilon_6}},\]where $Q_{\epsilon_5, \epsilon_6}=(M_{Q_{\epsilon_5, \epsilon_6}})^{SACG}$.  Moreover, by Lemma \ref{lem:isomorphisms} we have that \[M_{Q_{\epsilon_5, \epsilon_6}}\cong\begin{pmatrix}
    |w_{42}| & 0\\
    |w_{52}| & 0\\
    w & |w_{11}+\epsilon_5 w_{21}+\epsilon_6 w_{31}|
    \end{pmatrix}\]for some integer $w$ such that the conditions of \cref{lemma-L-shaped} are satisfied.  Using \cref{lemma-L-shaped}, one can then show that if $Q_{\epsilon_5, \epsilon_6}$ has loops, then $w_{11}+\epsilon_5 w_{21}+\epsilon_6 w_{31}=\pm 1$.  Solving the resulting system of equations, we then find that $Q_{\epsilon_5, \epsilon_6}$ has loops for all choices of $\epsilon_5$ and $\epsilon_6$ if and only if $(w_{11},w_{21},w_{31})^t=e_i$ for some $i=1,2,3$.  But then $X$ has loops, contrary to our starting assumptions.  Hence there is a choice of $\epsilon_5$ and $\epsilon_6$ such that $Q_{\epsilon_5, \epsilon_6}$ does not have loops.  By \cref{lem:general-upper-bound}, we have that $\chi(Q_{\epsilon_5, \epsilon_6})\geq 4$.  One more use of \cref{lemma-L-shaped} then proves the claim.

\vspace{.1in}

Multiplying one or both of the top two rows by $-1$ as needed, we may assume without loss of generality that $w_{42}=w_{52}=1$.  That is:

\[M_X\cong \begin{pmatrix}
    1 & 0\\
    1 & 0\\
    w_{12} & w_{11}\\
    w_{22} & w_{21}\\
    w_{32} & w_{31}
    \end{pmatrix}.\]

From here, we add the top two rows to get:

\[ M_X\xrightarrow{\ocirc} \begin{pmatrix}
    2 & 0\\
    w_{12} & w_{11}\\
    w_{22} & w_{21}\\
    w_{32} & w_{31}
    \end{pmatrix}=M_W.\]

Let $q$ be the number of elements of the set $\{w_{11},w_{21}, w_{31}\}$ that are congruent to $0$ mod $3$.  We now condition on the value of $q$.
\begin{description}
     \item[Case $q=3$] In this case we have that $w_{11}=3t_1, w_{21}=3t_2, w_{31}=3t_3$ for some integers $t_1, t_2, t_3$.  We can collapse the middle two rows to get \[\begin{pmatrix}
    2&0\\w_{12}&3t_1\\w_{22}&3t_2\\w_{32}&3t_3
\end{pmatrix}\xrightarrow{\ocirc}\begin{pmatrix}
    2&0\\w_{12}+w_{22}&3(t_1+t_2)\\w_{32}&3t_3
\end{pmatrix}=M_Z.\] Then $M_Z$ satisfies conditions (1)-(3) of MNHF. We can perform row and column operations on $M_Z$ to obtain a new matrix in MHNF, with the same top row as $M_Z$, whose associated graph is isomorphic to $Z=M_Z^{SACG}$.  Because the top left entry is $2$, this is not one of the six exceptional cases in \cref{3by2}. So, the only way it can fail to have chromatic number 3 is for $Z$ to have loops.  This occurs precisely when $e_i$ is in the $\Z$-span of the columns of $M_Z$ for some $i=1,2,3$.  However, it is straightforward to show that that is not possible. \\

    \item[Case $q=2$]
    Assume without loss of generality that $w_{11}$ and $w_{21}$ are divisible by $3$.
    
    If a row of $M_W$ is divisible by 3, we  apply \cref{lem:4-by-2-three-div-row} to conclude that $M_W$ has loops. The only way for this to have loops is if $w_{11}=w_{21}=0$ and $w_{31}=\pm 1$, in which case $X$ has loops, contrary to assumption.\\
    
    Now suppose that no row of $M_W$ is divisible by 3.  Then $w_{12}$ and $w_{22}$ are $\pm 1\mod 3$. Hence we have\[M_W\xrightarrow{\ocirc}\begin{pmatrix}
    2 & 0\\
    w_{12} +\epsilon_2w_{22}& w_{11}+\epsilon_2w_{21}\\
    w_{32} & w_{31}
    \end{pmatrix},\]
where $\epsilon_2$ is chosen so that $w_{12} +\epsilon_2w_{22}\equiv 0\mod 3$.  By \cref{cor:3-by-2-three-div-row}, the only way the graph associated to the latter matrix can fail to have chromatic number $3$ is for the second column to be $e_3$, so $w_{31}=\pm1$ and $w_{11}=\epsilon_3 w_{21},$ where $\epsilon_3=-\epsilon_2$.  Thus we have that\[M_X\cong\begin{pmatrix}
    1 & 0\\
    1&0\\
    w_{12} & \epsilon_3 w_{21}\\
    w_{22} & w_{21}\\
    w_{32} & \epsilon_4
    \end{pmatrix}
    \cong 
    \begin{pmatrix}
    1&0\\ 1&0\\ 
    w_{12}-\epsilon_3\epsilon_4 w_{21}w_{32} &\epsilon_4\epsilon_3w_{21}\\
    w_{22}-\epsilon_4 w_{21}w_{32}&\epsilon_4 w_{21}\\
    0&1
    \end{pmatrix}.
    \]

Then, the first and third row can be added or subtracted to get
\begin{equation}\label{eq:MX-with-w21-in-2nd-col}\begin{pmatrix}
    1&0\\ 1&0\\ 
    w_{12}-\epsilon_3\epsilon_4 w_{21}w_{32} &\epsilon_4\epsilon_3w_{21}\\
    w_{22}-\epsilon_4 w_{21}w_{32}&\epsilon_4 w_{21}\\
    0&1
    \end{pmatrix}\xrightarrow{\ocirc}\begin{pmatrix}
    \epsilon_5+w_{12}-\epsilon_3\epsilon_4w_{21}w_{32}&\epsilon_4\epsilon_3w_{21} \\
    1&0 \\
    w_{22}-\epsilon_4w_{21}w_{32} &\epsilon_4w_{21}\\
    0&1
    \end{pmatrix}.\end{equation} Because $w_{21}\equiv0\mod 3$, our $\epsilon_5$ can be chosen such that $\epsilon_5+w_{12}\equiv 0\mod 3$, so the top row is divisible by 3. By \cref{lem:4-by-2-three-div-row}, the graph associated to the latter matrix must have loops.  Thus $e_i$ is in the $\Z$-span of its columns for some $i=1,2,3,4$.  From this it follows that either the first column is $e_2$ or the second column is $e_4$.  The first column cannot be $e_2$ because $w_{22}$ is not 0 $\mod 3$.  So the second column is $e_4$, which implies that $w_{21}=0$. But then from (\ref{eq:MX-with-w21-in-2nd-col}) we see that $X$ has loops, a contradiction.  \\

     \item[Case $q=1$] 
Without loss of generality, suppose that $3$ divides $w_{31}$, and that $w_{11}\equiv\epsilon_1\mod 3$ and $w_{21}\equiv\epsilon_2\mod 3$.   Writing the reduction of our matrices mod $3$ rather than the matrices themselves, we have
\[\begin{pmatrix}
    2&0\\w_{12}&\eps_1\\w_{22}&\eps_2\\w_{32}&0
\end{pmatrix}\xrightarrow{\ocirc}\begin{pmatrix}
    2&0\\\eps_1w_{12}-\eps_2 w_{22}&\eps_1^2-\eps_2^2 =0\\w_{32}&0
\end{pmatrix}.\] The contradiction here is now the same as in the $q=3$ case above. \\

    \item[Case $q=0$]Choose $\epsilon_1, \epsilon_2, \epsilon_3$ to be congruent mod $3$ to $w_{11}$, $w_{21}$, and $w_{31}$, respectively.  We then have 
    $$\begin{pmatrix}
    2&0\\w_{12}&\eps_1\\w_{22}&\eps_2\\w_{32}&\eps_3
\end{pmatrix}\xrightarrow{\ocirc}\begin{pmatrix}
    2&0\\ -(\eps_1w_{12}+\eps_2w_{22}) &  -(\epsilon^2_1+\epsilon_2^2)\\\eps_3 w_{32}&1
\end{pmatrix} \equiv \begin{pmatrix}
    2&0\\-(\eps_1w_{12}+\eps_2w_{22}) &1\\\eps_3w_{32}&1
\end{pmatrix}.$$ The argument going forward is similar to that in the $q=3$ and $q=1$ cases.  The matrix on the right satisfies conditions (1)--(3) of MHNF.  By the same logic as in the $q=3$ case, the graph associated to that matrix must have loops.  So $e_i$ is in the $\Z$-span of its columns for some $i=1,2,3$.  We cannot have $i=1$, as the top row is divisible by $2$.  So the second column is $e_2$ or $e_3$. If it were $e_2$, there is a contradiction because $1\not\equiv 0\mod 3$ (from the bottom right entry). If it were $e_3$, there is a contradiction from the middle right entry.   

\end{description}

All possible cases lead to a contradiction, so $\chi(X)<4$. By assumption, $X$ is colorable and not bipartite, so $\chi(X)=3$.
\end{proof}

    \subsection{$m\times 2$ Heuberger matrices with $m>5$}

\begin{comment}
Here is an equation we want to label:\begin{equation}\label{eq:some-thing}
    a^2+b^2=c^2
\end{equation}

By (\ref{eq:some-thing}), we have that...
\end{comment}

In this section, prove the main theorem of this paper.

\begin{proof}[Proof of \cref{thm:main}]
The proof is by induction, with \cref{thm:base-case-for-main-thm} as a base case.  Let $m\geq 6$.  We assume that \cref{thm:main} has been proven for $(m-1)\times 2$ matrices, and we now prove it for $m\times 2$ matrices.

Let \[M_X=\begin{pmatrix}
x_{11} & x_{12}\\
\vdots & \vdots\\
x_{m1} & x_{m2}
\end{pmatrix}.\]
Suppose $\chi(X)\geq 4$.  Consider the graph homomorphisms \[M_X\xrightarrow{\ocirc}\begin{pmatrix}
x_{11} & x_{12}\\
\vdots & \vdots\\
x_{m-2,1} & x_{m-2,2}\\
x_{m-1,1}+x_{m1} & x_{m-1,2}+x_{m2}
\end{pmatrix}=M_{W_1}\text{ and }\]\[M_X\xrightarrow{\ocirc}\begin{pmatrix}
x_{11} & x_{12}\\
\vdots & \vdots\\
x_{m-2,1} & x_{m-2,2}\\
x_{m-1,1}-x_{m1} & x_{m-1,2}-x_{m2}
\end{pmatrix}=M_{W_2}\]obtained by adding (respectively, subtracting) the bottom two rows of $M_X$.

By \cref{lem:general-upper-bound} and the inductive hypothesis, it follows that $W_1=M_{W_1}^{SACG}$ and $W_2=M_{W_2}^{SACG}$ both have loops.  Thus we have that \begin{align}\label{eq:system-1}
\alpha_1 y_1 +\beta_1 y_2 &= e_i\\
\label{eq:system-2}\alpha_2 z_1 +\beta_2 z_2 &= e_j
\end{align}for some $i,j=1,\dots,m-1$, where $\alpha_1,\beta_1,\alpha_2,\beta_2$ are integers; $y_1$ and $y_2$ are the first and second columns of $M_{W_1}$, respectively; and $z_1$ and $z_2$ are the first and second columns of $M_{W_2}$, respectively.

Let $r_1, \dots,r_{m-2}$ be the first $m-2$ rows of $M_X$.  These are also the first $m-2$ rows of $M_{W_1}$ as well as of $M_{W_2}$. Note that $m-2\geq 4$.  Hence there exist at least two distinct indices $a,b\notin\{i,j\}$.  Without loss of generality, suppose that $a=1$ and $b=2$.  By (\ref{eq:system-1}) and (\ref{eq:system-2}) we get that several inner products vanish, namely, \[(\alpha_1,\beta_1)\cdot r_1=(\alpha_2,\beta_2)\cdot r_1=0.\]By assumption, $r_1$ is not a zero row.  Moreover, the $i$th (respectively, $j$th) component in (\ref{eq:system-1}) (respectively, (\ref{eq:system-2})) is $1$, so neither $(\alpha_1,\beta_1)$ nor $(\alpha_2,\beta_2)$ is the zero vector and moreover, $\gcd(\alpha_1,\beta_1)=\gcd(\alpha_2,\beta_2)=1$.  It follows that $\alpha_2=\epsilon_1\alpha_1$ and $\beta_2=\epsilon_1\beta_1$ for some $\epsilon_1=\pm 1$.  From the $(m-1)$th components of (\ref{eq:system-1}) and (\ref{eq:system-2}), we then get that \begin{align*}
\alpha_1 (x_{m-1,1}+x_{m1})+\beta_1(x_{m-1,2}+x_{m2}) &=\eta_1\\
\epsilon_1\alpha_1 (x_{m-1,1}-x_{m1})+\epsilon_1\beta_1(x_{m-1,2}-x_{m2}) &=\eta_2,
\end{align*}

where $\eta_1=1$ if $i=m-1$ and $\eta_1=0$ if $i\neq m-1$, and $\eta_1=1$ if $i=m-1$ and $\eta_1=0$ if $i\neq m-1$.  From this we find that \begin{align}\label{eq:last-two-rows-1}
\alpha_1 x_{m-1,1}+\beta_1 x_{m-1,2} &=(\eta_1+\epsilon_1\eta_2)/2\\
\label{eq:last-two-rows-2}\alpha_1 x_{m,1}+\beta_1 x_{m,2} &=(\eta_1-\epsilon_1\eta_2)/2
\end{align}

The right-hand sides of (\ref{eq:last-two-rows-1}) and (\ref{eq:last-two-rows-2}) are integers, so $\eta_1$ and $\eta_2$ are either both $0$ or both $1$.  In the former case, we have that $i<m-1$.  In the latter case, we have that $i=j=m-1$ and that the right-hand side of one of (\ref{eq:last-two-rows-1}) or (\ref{eq:last-two-rows-2}) is $0$, while the other is $1$.  In either case, we get that $\alpha_1 x_1+\beta_1 x_2=e_k$ for some $k=1,\dots,m$, by considering (\ref{eq:system-1}) for the first $m-2$ rows, and (\ref{eq:last-two-rows-1}) and (\ref{eq:last-two-rows-2}) for the last two rows.  But this implies that $X$ has loops, contrary to assumption.\end{proof}

\bibliographystyle{amsplain}
\bibliography{5-by-2}

\end{document}